\newtheorem{theorem}{Theorem}[section]
\newtheorem{lemma}{Lemma}[section]
\newtheorem{proposition}{Proposition}[section]
\theoremstyle{definition}
\newtheorem{definition}{Definition}[section]
\theoremstyle{remark}
\newtheorem{remark}{Remark}[section]
\renewcommand{\div}{ {\rm div}}
\newcommand{\na}{\nabla }
\newcommand{\de}{\delta}
\newcommand{\eps}{\epsilon}
\newcommand{\la}{\label}
\newcommand{\si}{\sigma}
\newcommand{\Ga}{\Gamma}
\newcommand{\bnn}{\begin{eqnarray*}}
\newcommand{\enn}{\end{eqnarray*}}
\newcommand{\ba}{\begin{aligned}}
\newcommand{\ea}{\end{aligned}}
\newcommand{\be}{\begin{equation}}
\newcommand{\ee}{\end{equation}}
\def\b{\beta}
\def\p{\partial}
\def\th{\theta}
\def\lap{\triangle}
\def\om{\Omega}
\def\r{\mathbb{R}^{3}}
\def\dist{\text{dist}}
\title[Vanishing viscosity near Onsager's critical regularity]{A Kato-type criterion for vanishing viscosity near the Onsager's critical regularity}
\author[R. M. Chen]{Robin Ming Chen}
\address{Department of Mathematics, University of Pittsburgh, Pittsburgh, PA 15260.} 
\email{mingchen@pitt.edu}
\author[Z. L. Liang]{Zhilei Liang}
\address{School of Economic Mathematics, Southwestern  University of Finance and Economics, Chengdu  611130,  China.} 
\email{zhilei0592@gmail.com}
\author[D. Wang]{Dehua Wang}
\address{Department of Mathematics, University of Pittsburgh, Pittsburgh, PA 15260.} 
\email{dwang@math.pitt.edu}
\begin{document}

\maketitle

 \begin{abstract}   
 We consider a vanishing viscosity sequence of weak solutions of the three-dimensional Navier--Stokes equations on a bounded domain. In a seminal paper \cite{Kato1984} Kato showed that for sufficiently regular solutions, the vanishing viscosity limit is equivalent to having vanishing viscous dissipation in a boundary layer of width proportional to the viscosity. We prove that Kato's criterion holds for H\"older continuous solutions with the regularity index arbitrarily close to the Onsager's critical exponent through a new boundary layer  foliation and a global mollification. 
 \end{abstract}

\setcounter{tocdepth}{1} 
%\tableofcontents

\section{Introduction}

The motion of an incompressible viscous  fluid  with   constant density    is governed by the following Navier-Stokes equations:   
\begin{equation}\label{1ns}
\left\{\ba
& \p_{t}u^{\nu} +{\rm div}(u^{\nu}\otimes u^{\nu} )+\na P^{\nu}=\nu \lap u^{\nu},\\
& {\rm div} u^{\nu}=0,\\
%&u(x,t)|_{\p\om}=0,\\
&u^{\nu}(x,0)=u_{0}^{\nu}(x),\ea \right.
\end{equation}
 where the constant  $\nu>0$ denotes the viscosity  of the fluid,   the unknown functions   $ u^{\nu}$  and $ P^{\nu}$  are   the    velocity field  and  pressure, respectively. Here the superscript $\nu$ is used on all the unknowns to emphasize the dependence on the viscosity. The  Navier-Stokes equations at zero viscosity $\nu = 0$ formally become the Euler equations:
 \begin{equation}\label{1e}
\left\{\ba
& \p_{t}u  +{\rm div}(u \otimes u)+\na P =0,  \\
& {\rm div} u=0, \\
%&u\cdot \vec{n}|_{\p\om}=0, \\
&u(x,0)=u_{0}(x).\\
\ea \right.
\end{equation}
 
An important problem in the study of incompressible hydrodynamics is the vanishing viscosity
limit from the Navier-Stokes equations \eqref{1ns} to the Euler equations \eqref{1e}, which is naturally associated with the physical phenomena of turbulence and of boundary layers. On domains without boundary, such a problem is well-understood: given a strong Euler solution $u^E \in C^1$, the Leray--Hopf solutions $u^\nu$ of \eqref{1ns} converge strongly in the energy space $L^\infty_tL^2_x$ to $u$ as $\nu\to0$; see, for exmple, \cite{BardosTiti2013}.

In the presence of boundary, on the other hand, systems \eqref{1ns} and \eqref{1e} considered in a bounded domain $\Omega$ are supplemented with the no-slip boundary condition $u|_{\partial\Omega} = 0$ and slip  boundary condition $(u \cdot n) |_{\partial\Omega} = 0$, respectively, with $n$ the unit outward normal of the boundary $\partial\Omega$.
The mismatch of the boundary conditions leads to the phenomenon of boundary layer separation. Establishing the vanishing viscosity limit 
in the energy space $L^\infty_tL^2_x$
%, even in the presence of a strong Euler solution, still 
in this case is much less understood.  A well-known result of Kato \cite{Kato1984} states that,
for a strong Euler solution $u^E$, the vanishing viscosity limit
\[
u^{\nu} \to u^E \quad \text{in }\ L^\infty(0, T; L^2(\Omega))
\]
holds if and only if 
\be\label{Kato}
\nu \int^T_0 \|\nabla u^\nu\|^2_{L^2(\Gamma_{c\nu})} \,dt \to 0 \quad \text{as }\ \nu \to 0, 
\ee
where $\Gamma_{c\nu}$ is a very thin boundary layer of width proportional to $\nu$.

Kato's theory is by nature conditional. Many of the known results on strong inviscid limits are also conditioned on special properties of the solutions \cite{bardos2007euler,BardosTiti2013,constantin2015inviscid,constantin2017remarks,Kelliher2007,kelliher2008vanishing,temam1997behavior,wang2001kato}. Some unconditional strong convergence results do exist, but with additional assumptions on the data like real analyticity \cite{caflisch1998zero}, vanishing of the initial vorticity near the boundary \cite{maekawa2014inviscid}, or special symmetry of the flow \cite{lopes2008vanishing,lopes2008physicad,mazzucato2008vanishing,kelliher2009vanishing}. These results are for short time and for laminar flows close to a smooth Euler solution when there is no boundary layer separation or other characteristic turbulent behavior.
%before any boundary layer separation or other characteristic turbulent behavior can develop.

The vanishing viscosity for turbulent flows faces serious challenges and remains widely open. 
 Little is known about the inviscid limit even when  a strong Euler solution exists for a short time. Therefore it is natural to consider the weak Euler solutions for the vanishing viscosity limit.
%The lack of information on the asymptotic limit even in the presence of a strong Euler solution for a short time motivates the need to consider weak notions of Euler solutions to describe the inviscid limit. 
%study the convergence in some weak sense.  So what kind of weak Euler solutions can  emerge as weak limits? 
One type of such weak Euler solutions is the measure valued solutions \cite{diperna1987oscillations}. In some recent works \cite{constantin2018remarks,drivas2018remarks} the authors describe sufficient conditions in terms of interior structure functions under which the weak $L^\infty_tL^2_x$ solutions $u^E$ of the Euler equations can be obtained as weak $L^2_{t,x}$ limits of $u^\nu$. In \cite{constantin2019vorticity}, the authors further extend the result of \cite{constantin2018remarks} to allow certain interior vorticity concentration.

 The classical result of Kato \cite{Kato1984} indicates that anomalous energy dissipation leads to the failure of the inviscid limit to a strong ($C^1$) Euler solution; while  the issue that weak Euler solutions may arise from the inviscid limit is  closely related to the Onsager's conjecture (see, for example \cite{BuckmasterVicol2019,drivas2018remarks} and the references therein).
%The emergence of weak Euler solutions in the inviscid limit is also closely related to anomalous dissipation of energy and the Onsager's conjecture (see, for example \cite{BuckmasterVicol2019,drivas2018remarks} and the references therein). From the classical result of Kato \cite{Kato1984} we know that in situations exhibiting anomalous dissipation, convergence to a strong ($C^1$) Euler solution cannot hold. 
It has been made a precise statement that the critical Onsager's H\"older regularity exponent is ${1/3}$, below which the Euler equations  become non-conservative \cite{De_Lellis_2012,De_Lellis_2014,Isett_2018,Isett_2017,Buckmaster_2019}, while above ${1/3}$  the energy conservation can be justified \cite{Eyink_1994,Constantin_1994,Duchon_1999,Cheskidov_2008}. In the works of \cite{bardos2018onsager1,DrivasNguyen2018} the authors derive sufficient conditions for $C^\alpha$ solutions under which the global viscous dissipation vanishes in the inviscid limit for Leray--Hopf solutions $u^\nu$, with an emphasis on the behavior or solutions near the boundary. In particular in \cite{DrivasNguyen2018} a Kato-type criterion on the vanishing of the energy dissipation rate in a thin boundary layer of thickness $O(\nu^\beta)$ is proposed, among other regularity conditions, where $\beta = {3\over4}+\epsilon$ near the critical Onsager threshold $\alpha = {1\over3}+\epsilon$. Note that the boundary layer in the result of \cite{DrivasNguyen2018} is thicker than that of Kato's. 

%\subsection{Main results}
The main goal of this paper is to bridge the gap between the original result of Kato \cite{Kato1984} for strong $C^1$ Euler solutions and the result of \cite{DrivasNguyen2018} for weak $C^{\alpha}$ Euler solutions. Specifically, we will show that under certain $\nu$-dependent assumptions on the family of solutions of \eqref{1ns}, a Kato-type result with boundary layer of thickness $O(\nu)$ holds for weak Euler solutions up to Onsager-critical spatial regularity $\alpha = {1\over3}+$. See Table \ref{table1} below. 

\begin{table}[h!]
\centering
\begin{tabular}{| m{8.8em} | m{9em} |c|c|}
\hline\rule[-4mm]{0cm}{10mm}
\ & \hspace{.3cm} NS solution $u^\nu$ \hspace{.3cm} & \hspace{.1cm} Euler solution $u^E$ \hspace{.1cm} & \hspace{.1cm} boundary layer \hspace{.1cm} \\ 
 \hline\rule[-4mm]{0cm}{10mm} Kato \cite{Kato1984} & \hspace{.4cm} Leray--Hopf  & $C^1(\overline{\om})$ & $O(\nu)$ \\
\hline\rule[-4mm]{0cm}{10mm} Drivas--Nguyen \cite{DrivasNguyen2018} & $C^{1/3 +}_{\text{loc}}(\om)$ with boundary regularity & $C^{1/3 +}_{\text{loc}}(\om)$ & $O(\nu^{3/4 +})$ \\ \hline\rule[-4mm]{0cm}{10mm} Our result & $C^{1/3 +}_{\text{loc}}(\om)$ with boundary regularity & $C^{1/3 +}_{\text{loc}}(\om)$ & $O(\nu)$ \\ \hline
%\\ \hline\rule[-4mm]{0cm}{10mm} Our result & \ & 10 & \\ \hline\rule[-4mm]{0cm}{10mm} 5 & \ & 11 & \\ \hline\rule[-4mm]{0cm}{10mm} 6 & \ & Total & \\ \hline
%\\ \hline\rule[-4mm]{0cm}{10mm} Total & \\ \hline
\end{tabular}\bigskip
\caption{Comparison of results} \vspace{-.5cm}
\label{table1}
\end{table}

Let us recall the classical existence results of Leray \cite{Leray34} and Hopf \cite{Hopf51}. For a divergence-free function $u_{0}\in L^{2},$   problem  \eqref{1ns} has a weak solution  $u\in C\left(0,T;L^{2}\right) \cap L^{2}\left(0,T;H^{1}(\om)\right)$  in a bounded smooth domain $\om$ for any $T<\infty$.   Additionally, $u$ is divergence-free and  the following energy inequality holds
 \be\la{0} \frac{1}{2}\int_{\om}|u^{\nu}|^{2}dx+\nu\int_{0}^{t}\int_{\om}|\na u^{\nu}|^{2}dxds\le \frac{1}{2}\int_{\om}|u^{\nu}_{0}|^{2} dx,\quad a.e.\,\,\, t\in (0, T).\ee   Such a weak solution is called {\it Leray--Hopf weak solution}.

%
% In this paper, we  are interested in the inviscid  limit   of  {\it Hopf weak solution}  to  the problem  \eqref{1ns}.  Formally,    as viscosity $\nu$ vanishes,  one should expect that   the function $(u^{\nu},\,P^{\nu})$ satisfies the      incompressible  Euler  equations

Next we introduce some notation. For some (small) $h>0,$ we define   
\be\la{d}  \om^{h} :=\{x\in \om,\,\,\,\dist(x,\,\p\om)>h\}\quad{\rm and}\quad \Ga_{h} :=\om\backslash \om^{h}.\ee
Also, we introduce  the   Besov space $B_{p}^{\alpha,\infty}(\om)$ which consists of  measurable  functions  with the norm
\be\la{b} \|f\|_{B_{p}^{\alpha,\infty}(\om)}:=\|f\|_{L^{p}(\om)}+\sup_{y\in \r}\frac{\|f(\cdot+y)-f(\cdot)\|_{L^{p}(\om\cap (\om-\{y\}))}}{|y|^{\alpha}},\quad p\ge 1, \ \alpha \in (0,1).\ee

We further denote $H(\om)$ to be the completion in $L^2(\om)$ of the space $\{ v \in C^\infty_c(\om; \mathbb R^3):\ \div v = 0 \}$, and recall the following definition of the weak Euler solutions (see, for example \cite{DrivasNguyen2018}).
\begin{definition}\label{def euler}
Let  $\om\subset \r$ be a bounded domain with $C^2$ boundary. We say the pair $(u, P)$ is a weak Euler solution to \eqref{1e} on $\om \times (0, T)$ if $u\in C_w(0, T; H(\om))$, $P \in L^1_{\text{loc}}(\om \times (0, T))$ and for all test vector fields $\varphi \in C^\infty_0(\om\times(0, T))$ it holds that
\begin{equation*}
\int^T_0\int_\om \left( u\cdot \partial_t \varphi + u \otimes u : \nabla \varphi + P \nabla \cdot \varphi \right)\,dxdt = 0.
\end{equation*}
%the first equation of \eqref{1e} holds in a distributional sense. 
%Here $H(\om)$ is the completion in $L^2(\om)$ of the space $\{ v \in C^\infty_c(\om; \mathbb R^3):\ \div v = 0 \}$.
\end{definition}

Our main result is stated in the following theorem.
\begin{theorem}\la{thm0}  
Let  $\om\subset \r$ be a bounded domain with $C^2$ boundary.  Let $\{ u^\nu \}_{\nu>0}$ be a sequence of Leray--Hopf weak solutions to \eqref{1ns} with initial data $u_{0}^{\nu}$ and suppose that $u_{0}^{\nu} \to u_0$ in $L^{2}(\om)$ as $\nu \to 0$. Assume in addition that 
  \be\la{h1}    \ba &  u^{\nu}\,\,{is\,\, uniformly\,\, in} \,\,\nu \,\, {bounded\,\, in}\,\,  L^{3}\left(0,T;B_{3}^{\alpha,\infty}(\om^{\nu})\right)\,\,\,{for\,\,some}\,\,\,\alpha\in  \left(\frac{1}{3},\,1 \right), \ea\ee 
 \be\ba\la{02h2} \left\{\ba&   u^{\nu} \,\,{is\,\, uniformly \,\,in}\,\,  \nu \,\,{bounded\,\, in}\, \,  L^{4}\left(0,T;L^{\infty}\left(\Ga_{4\nu}\right)\right),\\
&  P^{\nu}\,\,{\ is \,\, uniformly \,\, in}\,\,  \nu \,\,{bounded\,\, in}\, \,  L^{2}\left(0,T;L^{\infty}\left(\Ga_{4\nu}\right)\right).\ea\right.\ea\ee
Then,  if 
\be\la{0j1}  \lim_{\nu\rightarrow0}\nu \int_{0}^{T}\int_{\Ga_{4\nu}}|\na u^{\nu}|^{2}dxdt=0,\ee
we have that the global   viscous dissipation vanishes, i.e.,
\begin{equation}\la{00}
\lim_{\nu\rightarrow0}\nu\int_{0}^{T}\int_{\om}|\na u^{\nu}|^{2}dxdt=0,
\end{equation}
and moreover,  $u^{\nu}$ converges locally  in $L^{3}(0,T;L^{3}(\om))$,  up to some  subsequence,  to a weak solution of the Euler equations  \eqref{1e}. 
\end{theorem}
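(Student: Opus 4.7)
The plan is to split the global viscous dissipation into an interior piece on $\om^{4\nu}$ and a boundary-layer piece on $\Ga_{4\nu}$. The boundary-layer contribution is already tamed by hypothesis \eqref{0j1}, so the heart of the matter is to show that the interior dissipation also vanishes, and then to identify the weak limit. To this end I would construct a global mollification $u^{\nu,\ep}$ of $u^\nu$ whose kernel is adapted to $\p\om$: parametrize a tubular neighborhood of $\p\om$ by $(y,s)$ with $s=\dist(\cdot,\p\om)\in(0,4\nu)$, mollify along the leaves $\{s=\mathrm{const}\}$ at a scale $\ep=\ep(\nu)$ satisfying $\nu\ll\ep\ll 1$ and $\ep^{3\a-1}\to 0$, and use a standard convolution at the same scale $\ep$ in $\om^{4\nu}$. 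Convolving \eqref{1ns} with this mollifier and testing against $u^{\nu,\ep}$ yields an approximate energy identity
\[
\tfrac12\|u^{\nu,\ep}(T)\|_{L^2(\om)}^2+\nu\int_0^T\|\na u^{\nu,\ep}\|_{L^2(\om)}^2\,dt=\tfrac12\|u_0^{\nu,\ep}\|_{L^2(\om)}^2+\mathcal R^{\nu,\ep},
\]
where $\mathcal R^{\nu,\ep}$ collects convective and pressure commutator errors plus boundary corrections from the foliation cutoff.

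Interior commutators are estimated in the Constantin--E--Titi / Duchon--Robert style by
\[
\left|\int_0^T\!\!\int_{\om^{4\nu}}\!\bigl((u^\nu\otimes u^\nu)^\ep-u^{\nu,\ep}\otimes u^{\nu,\ep}\bigr):\na u^{\nu,\ep}\,dx\,dt\right|\lesssim \ep^{3\a-1}\|u^\nu\|_{L^3_t B_3^{\a,\infty}(\om^\nu)}^3,
\]
which tends to zero as $\nu\to 0$ thanks to \eqref{h1} and $\a>1/3$. Boundary-layer remainders are controlled using \eqref{02h2}: the convective and pressure contributions on $\Ga_{4\nu}$ are bounded pointwise in space and integrated against a region of $3$-dimensional measure $O(\nu)$, so they vanish in the limit. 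Combining this with the Leray--Hopf inequality \eqref{0}, the $L^2$ convergence of the initial data, and \eqref{0j1}, the identity forces $\nu\int_0^T\int_{\om^{4\nu}}|\na u^\nu|^2\,dx\,dt\to 0$, which together with \eqref{0j1} gives \eqref{00}. For the convergence of $u^\nu$, the uniform bound \eqref{h1} combined with time regularity extracted from \eqref{1ns} (the pressure and $L^3$ spatial bounds make $\p_t u^\nu$ bounded in a suitable negative-order space locally) supplies enough compactness via Aubin--Lions to pass to a subsequence converging strongly in $L^3_{\mathrm{loc}}(0,T;L^3_{\mathrm{loc}}(\om))$ to some $u$. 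The nonlinearity then converges by strong convergence and the viscous term vanishes by \eqref{00}, identifying $u$ as a weak Euler solution in the sense of Definition~1.1.

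The principal obstacle, as foreshadowed by the paper's title, is the construction of the mollification up to $\p\om$. The Besov bound \eqref{h1} is \emph{interior only}, so a naive convolution fails in $\Ga_{4\nu}$; the foliation device is what allows the mollifier to act only along directions tangential to $\p\om$ in the thin layer, meshing with the pointwise bounds \eqref{02h2}, while preserving the Constantin--E--Titi commutator structure in the interior. Maintaining the Kato boundary-layer thickness at $O(\nu)$, rather than the thicker $O(\nu^{3/4+})$ used in \cite{DrivasNguyen2018}, is precisely what this delicate bookkeeping achieves; in particular the matching of the mollification scale $\ep$ to the layer width $\nu$, and the balancing of the commutator gain $\ep^{3\a-1}$ against the boundary losses $|\Ga_{4\nu}|\sim\nu$, are the quantitative estimates I expect to require the most care.
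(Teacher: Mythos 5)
Your overall architecture (mollify, derive a resolved energy balance, control the flux by Constantin--E--Titi commutators, compare with the Leray--Hopf inequality \eqref{0}, then Aubin--Lions compactness) matches the paper, and your final compactness/identification step is essentially the paper's Section 4.2. But the core of your regularization scheme contains a quantitative contradiction that the paper's construction is specifically designed to escape. With a single mollification scale $\ep$, the resolved interior dissipation is of order $\nu\,\ep^{2(\alpha-1)}\|u^\nu\|^2_{L^3_tB^{\alpha,\infty}_3}$, which vanishes only if $\ep\gg\nu^{1/(2(1-\alpha))}$; near the Onsager exponent $\alpha=\tfrac13+$ this forces $\ep\gg\nu^{3/4}$, far larger than $\nu$. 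On the other hand a standard convolution at scale $\ep$ is defined only on $\om^{\ep}$, so the intermediate region $4\nu<\dist(x,\p\om)<\ep$ is covered neither by your interior mollification nor by the hypotheses \eqref{02h2}--\eqref{0j1}, which live only on $\Ga_{4\nu}$. Your leafwise (tangential) mollification does not repair this: by your own construction it acts only for $s\in(0,4\nu)$, and in any case it leaves the normal derivative of $u^{\nu,\ep}$ unsmoothed, so the resolved dissipation over that intermediate shell is not controlled by \eqref{0j1}. Taking instead $\ep\sim\nu$ (your ``matching of the mollification scale to the layer width'') makes the interior resolved dissipation of order $\nu^{2\alpha-1}$, which blows up for $\alpha<\tfrac12$. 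This is precisely the obstruction that confines Drivas--Nguyen to $O(\nu^{3/4+})$ layers, and your proposal does not resolve it.

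The missing idea is the multi-scale boundary layer foliation of Section 2.3. The paper introduces exponents $0=\b_0<\b_1<\cdots<\b_N=1$ satisfying the recursion $\b_n<\frac{1}{2(1-\alpha)}\left(1+\frac13\b_{n-1}\right)$, decomposes $\om$ into overlapping shells $V_n$ whose measures are $O(\nu^{\b_{n-1}})$, and mollifies at scale $\nu^{\b_n}$ (or $\nu^{\b_{n+1}}$ on overlaps) \emph{within each shell}, gluing with a partition of unity. The point is that the worsened factor $\nu^{2\b_n(\alpha-1)}$ coming from the finer mollification is paid for by the small measure $\nu^{\b_{n-1}/3}$ of the shell (via H\"older against the $L^3$-based Besov norm), cf.\ \eqref{4.5}, and the recursion lets the admissible mollification scale descend to $\nu^{\b_N}=\nu$ in finitely many steps; one also needs the exact cancellations \eqref{f eq} and \eqref{ll1} so that the $\na(\xi_k\xi_m)$ cross terms vanish identically rather than merely being small, cf.\ \eqref{4.10e}. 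In the innermost $O(\nu)$ layer the paper does not mollify at all: it cuts off with $\th$ and controls the $\na\th$ terms using \eqref{02h2}, \eqref{0j1} and the Hardy inequality. Without some version of this cascade (or another mechanism decoupling the mollification scale from the distance to the boundary), your step ``the identity forces $\nu\int_0^T\int_{\om^{4\nu}}|\na u^\nu|^2\,dx\,dt\to0$'' does not go through.
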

\begin{remark}\label{rk_Kato}
Condition \eqref{0j1} recovers Kato's criterion \eqref{Kato}, but now in the framework of weak solutions with $C^\alpha$ regularity, where $\alpha$ can be taken arbitrarily close to the Onsager's critical exponent, cf. \eqref{h1}. 
\end{remark}
\begin{remark}\label{rk_DN}
As  pointed out in \cite{DrivasNguyen2018}, violation of conditions \eqref{h1} -- \eqref{0j1} is responsible for global dissipation to persist in the vanishing viscosity limit. More precisely, violation of \eqref{h1} corresponds to a failure of uniform interior regularity, which is required for anomalous dissipation in domains without boundaries; see, for example \cite{Constantin_1994}. On the other hand, conditions \eqref{02h2} -- \eqref{0j1} provide new mechanisms for anomalous dissipation in wall-bounded flows.
\end{remark}

The basic idea in \cite{DrivasNguyen2018} is separation and regularization: applying a cut-off function to separate the boundary from the interior domain, and mollifying the interior velocity field. This introduces two length scales: the thickness $h$ of the  boundary layer and the mollification scale $\epsilon$. With this localization, the resolved dissipation is bounded by $\nu \epsilon^{2(\alpha - 1)} \int^t_0 \|u^\nu\|^2_{B^{\alpha,\infty}_3(\Omega^h)}$; see \cite[Section 2.1]{DrivasNguyen2018}. Recalling the natural constraint that $\epsilon \le h$, imposing appropriate interior regularity assumption on the solution, and setting $h \sim \nu^\beta$, the above estimate translates to $\nu^{1 + 2\beta(\alpha - 1)}$. In order for this to vanish at the inviscid limit $\nu \to 0$ one has to require that 
\be\label{DN beta}
1 + 2\beta(\alpha - 1) > 0, 
\ee
which, at the Onsager's critical regularity $\alpha = {1\over3}+$, returns $\beta = {3\over4}+$.
 
The obvious obstacle in the above approach to get to the $O(\nu)$ boundary layer thickness lies in the strong constraint between the two scales $\epsilon$ and $h$. In other words, if one can find a way to ``free up" the choice for $\epsilon$ so as to improve the mollification, then it is reasonable to hope to obtain a thinner boundary layer. 

Motivated by a recent work of the authors \cite{Chen2019}, where a global mollification was introduced, we design a new localization technique with additional treatment near the boundary. In particular, we will start with a boundary layer of the type as in \cite{DrivasNguyen2018} and perform a further foliation within that boundary layer, mollify the solution with different scales in each leaf of the foliation, and then glue everything together by a partition of unity. Such a new type of mollification generates additional cancelation effects in estimating the resolved dissipation, allowing one to reach the $O(\nu)$ boundary layer. Moreover, using the same idea, we also show that as the solution becomes more regular (corresponding to increasing $\alpha$), the regularity requirement \eqref{02h2}  near the boundary can be relaxed, and the the boundary layer in \eqref{0j1} is allowed to be thinner, cf. Theorem \ref{thm2}.

The rest of the paper is organized as follows. In Section \ref{sec prep} we briefly recall some needed analytical results, and introduce the boundary layer foliation. In Section \ref{sec reg} we define the mollification and use that to regularize the system. By testing the resolved system with suitable test function we prove the balance of the resolved energy, from which we proceed in Section \ref{sec proof} to give the proof of Theorem \ref{thm0}. Finally in Section \ref{sec higher reg} we extend the result of Theorem \ref{thm0} to the case when solutions are more regular.

\section{Preliminaries}\label{sec prep}
\subsection{Commutator estimates}
Recall the standard  mollification for  the  function $f$ 
 \be\la{r2}\overline{f_{\eps}}(x) :=\int_{B_{\eps}(0)} f(x-y)\eta_{\eps}(y)dy, \quad \forall\,\,\,x\in \om^{\eps},\ee
 with $\eta_{\eps}$ being the standard mollifier of width $\eps.$
Straightforward calculation gives     \bnn\ba \na \overline{f_{\eps}}(x)
&=-\int_{B_{\eps}(0)} \eta_{\eps}(y)\na_{y}( f (x -y)-f(x) )dy={1 \over \eps} \int_{B_1(0)}\na \eta(y) ( f (x - \eps y)-f(x)) dy,\ea\enn
and hence, for $f \in B^{\alpha,\infty}_r$ with $r\in [1,\infty]$,   
\be\la{2.12} \|\na \overline{f_{\eps}}\|_{L^{r}(\om^{\eps})}  \le \eps^{\alpha-1}\|f\|_{B_{r}^{\alpha,\infty}(\om)}.
\ee
Similarly,
\be\la{b5}\ba \|\overline{f_{\eps}} -f\|_{L^{r}(\om^{\eps})}
 \le C \eps^{\alpha}\|f\|_{B_{r}^{\alpha,\infty}(\om)}.\ea\ee
\begin{lemma} \la{lem2.1}   Let  $f\in B_{r_{1}}^{\alpha,\infty}(\om),\,g\in B_{r_{2}}^{\alpha,\infty}(\om)$,  and let  $1\le r,\,r_{1},\,r_{2}< \infty,$  $ {1\over r_{1}}+ {1\over r_{2}}= {1\over r}.$ Then there exists some $C>0$ such that the following hods
\be\la{q2} \|\overline{(f\otimes g)_{\eps}}-\overline{f_{\eps}}  \otimes \overline{g_{\eps}}\|_{L^{r}( \om^{\eps})}\le C \eps^{2 \alpha}\|f\|_{B_{r_{1}}^{\alpha,\infty}(\om)}\|g\|_{B_{r_{2}}^{\alpha,\infty}(\om)}.\ee  \end{lemma}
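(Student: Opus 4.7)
The plan is to reduce the commutator to quantities that are directly controlled by the Besov seminorm of $f$ and $g$, via a standard algebraic identity in the spirit of Constantin--E--Titi. Specifically, I would first observe the pointwise identity
\be\la{CET}
\ba
\overline{(f\otimes g)_{\eps}}(x)-\overline{f_{\eps}}(x)\otimes \overline{g_{\eps}}(x)
=&\int_{B_{\eps}(0)}\eta_{\eps}(y)\,\bigl(f(x-y)-f(x)\bigr)\otimes\bigl(g(x-y)-g(x)\bigr)\,dy\\
&-\bigl(\overline{f_{\eps}}(x)-f(x)\bigr)\otimes\bigl(\overline{g_{\eps}}(x)-g(x)\bigr),
\ea
\ee
valid for every $x\in \om^{\eps}$. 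This identity is verified by expanding the bilinear products on the right-hand side and cancelling the cross-terms $\overline{f_\eps}(x)\otimes g(x)$ and $f(x)\otimes \overline{g_\eps}(x)$, using that $\int \eta_\eps(y)\,dy=1$.

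Next I would estimate each of the two terms on the right-hand side of \eqref{CET} in $L^{r}(\om^{\eps})$. For the first term, I would apply Minkowski's integral inequality to pull the $L^{r}$ norm inside the $y$-integral, and then Hölder's inequality with exponents $r_{1},r_{2}$:
\bnn
\Bigl\|\int \eta_{\eps}(y)\bigl(f(\cdot-y)-f(\cdot)\bigr)\otimes\bigl(g(\cdot-y)-g(\cdot)\bigr)dy\Bigr\|_{L^{r}(\om^{\eps})}
\le \int \eta_{\eps}(y)\,\|f(\cdot-y)-f\|_{L^{r_{1}}}\|g(\cdot-y)-g\|_{L^{r_{2}}}\,dy.
\enn
Invoking the definition \eqref{b} of the Besov seminorm, each translation difference is bounded by $|y|^{\alpha}$ times the corresponding Besov norm. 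Since $\eta_\eps$ is supported in $|y|\le\eps$, this yields a factor $\eps^{2\alpha}$, as desired. For the second term, I would use Hölder's inequality directly together with the mollification error bound \eqref{b5}, which already gives $\|\overline{f_\eps}-f\|_{L^{r_1}(\om^\eps)}\le C\eps^\alpha\|f\|_{B^{\alpha,\infty}_{r_1}}$ and similarly for $g$, producing another factor of $\eps^{2\alpha}$.

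Summing the two contributions gives the claimed bound \eqref{q2}. I do not expect a serious obstacle here: the only mild care needed is to ensure that the translated arguments $x-y$ remain inside $\om$ when $x\in\om^{\eps}$ and $|y|\le\eps$ (so that $f,g$ are well-defined and the Besov seminorm applies), which is exactly the reason the estimate is stated on $\om^\eps$ rather than on all of $\om$. The identity \eqref{CET} is what separates the low-regularity part (the small-scale double increment) from the mollification error, and the gain of $\eps^{2\alpha}$ rather than $\eps^{\alpha}$ comes precisely from the fact that both factors in each term involve a spatial increment of size $\eps$.
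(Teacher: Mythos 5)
Your proposal is correct and follows essentially the same route as the paper: both start from the Constantin--E--Titi identity writing the commutator as the mollified double increment minus $(\overline{f_\eps}-f)\otimes(\overline{g_\eps}-g)$, and then estimate each piece by H\"older together with the Besov seminorm and the mollification error bound \eqref{b5}. The only cosmetic difference is that you apply Minkowski before H\"older on the first term, while the paper applies H\"older pointwise in $x$ with respect to the measure $\eta_\eps(y)\,dy$ and then integrates; both yield the same $\eps^{2\alpha}$ bound.
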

\begin{proof}   
Inequality \eqref{q2} is nothing but the commutator estimate in \cite{Constantin_1994}. Here we give an outline of the proof.   

Let    $\de f (x,y) := f(x -y)-f(x).$   By \eqref{r2} we compute   for every $x\in \om^{\eps}$, 
   \be\ba\la{q3.16}   & \overline{(f\otimes g)_{\eps}}-\overline{f_{\eps}}  \otimes \overline{g_{\eps}} \\
  & =\int_{B_{\eps}(x)} \de f(x,y)\otimes \de g(x,y) \eta_{\eps}(y)dy   -\left(\int_{B_{\eps}(x)} \de f(x,y)\eta_{\eps}(y)dy\right)\otimes \left(\int_{B_{\eps}(x)} \de g(x,y)\eta_{\eps}(y)dy\right)\\
  &\le \left(\int_{B_{\eps}(x)} |\de f(x,y)|^{r_{1}}\eta_{\eps}(y)dy\right)^{\frac{1}{r_{1}}}\left( \int_{B_{\eps}(x)}|\de g(x,y)|^{r_{2}}\eta_{\eps}(y)dy\right)^{\frac{1}{r_{2}}}+\left|\overline{f_{\eps}}-f \right| |\overline{g_{\eps}}-g|.\ea\ee  Integrating  \eqref{q3.16}  over $\om^{\eps}$ gives 
\bnn\ba  &\int_{\om^{\eps}} \left|\overline{(f\otimes g)_{\eps}}-\overline{f_{\eps}}  \otimes \overline{g_{\eps}}\right|^{r}dx \\
  &\le C \left(\int_{\mathbb{R}^{3}}\eta_{\eps}(y)\int_{\om^{\eps}} |\de f(x,y)|^{r_{1}}dxdy\right)^{\frac{r}{r_{1}}}\left(\int_{\mathbb{R}^{3}}\eta_{\eps}(y)\int_{\om^{\eps}} |\de g(x,y)|^{r_{1}}dxdy\right)^{\frac{r}{r_{2}}}\\
  &\quad+ C \left\|\overline{f_{\eps}}-f \right\|_{L^{r_{1}}}^{r} \|\overline{g_{\eps}}-g\|_{L^{r_{2}}}^{r}.\ea\enn
 This together with   \eqref{b} and   \eqref{b5} conclude the   desired   \eqref{q2}.
\end{proof}

\subsection{Pressure estimates}
The pressure $P^{\nu}$ appeared in \eqref{1ns} can be deduced from  the velocity $u^{\nu}$ via the Poisson equation 
\[
-\lap  P^{\nu}=\div\div (u^{\nu}\otimes u^{\nu}). 
\]
From \cite[Lemma 2]{Es17},  we have the following
\begin{lemma} \la{lemma2.2} 
Let $p\in (1,\infty).$ Assume that $u^{\nu} \in L^{2p}(\om)$ and $P^{\nu}|_{\p\om} \in L^p(\p\om)$. Then the pressure $P^{\nu} \in L^p(\om)$. In  addition, the following estimate holds,
\be\la{pp}  \|P^{\nu}\|_{L^{p}(\om)} \le C\left( \| P^{\nu}|_{\p\om} \|_{L^p(\p\om)} + \|u^{\nu}\|_{L^{2p}(\om)}^{2} \right).\ee
\end{lemma}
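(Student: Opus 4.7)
\medskip

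\textbf{Proof plan for Lemma \ref{lemma2.2}.} The plan is to split the pressure as $P^{\nu}=P_{1}+P_{2}$, where $P_{1}$ solves the inhomogeneous Dirichlet problem
\[
-\lap P_{1}=\div\div(u^{\nu}\otimes u^{\nu})\ \text{in}\ \om,\qquad P_{1}\big|_{\p\om}=0,
\]
and $P_{2}$ is the harmonic extension of the given boundary data,
\[
\lap P_{2}=0\ \text{in}\ \om,\qquad P_{2}\big|_{\p\om}=P^{\nu}\big|_{\p\om}.
\]
Bounding $P^{\nu}$ in $L^{p}(\om)$ reduces to bounding each of these two pieces separately, the first by the interior source $u^{\nu}\otimes u^{\nu}$ and the second by the boundary datum.

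For $P_{1}$ I would argue by duality. Given $\psi\in L^{p'}(\om)$, let $\vp$ solve $-\lap\vp=\psi$ in $\om$ with $\vp|_{\p\om}=0$; since $\p\om$ is $C^{2}$ and $1<p'<\infty$, standard Calder\'on--Zygmund/elliptic regularity gives $\|\vp\|_{W^{2,p'}(\om)}\le C\|\psi\|_{L^{p'}(\om)}$. Pairing $P_{1}$ against $\psi=-\lap\vp$ and integrating by parts twice, using crucially that $P_{1}|_{\p\om}=0$ and $\vp|_{\p\om}=0$ to kill the first round of boundary terms, and $u^{\nu}|_{\p\om}=0$ (the no-slip condition built into the Leray--Hopf class) to kill the boundary terms produced by transferring the two divergences onto $\vp$, one arrives at
\[
\langle P_{1},\psi\rangle=\int_{\om}u^{\nu}_{i}u^{\nu}_{j}\,\p_{i}\p_{j}\vp\,dx,
\]
so that by H\"older $|\langle P_{1},\psi\rangle|\le\|u^{\nu}\|_{L^{2p}}^{2}\|\na^{2}\vp\|_{L^{p'}}\le C\|u^{\nu}\|_{L^{2p}}^{2}\|\psi\|_{L^{p'}}$, and taking the supremum over $\psi$ yields $\|P_{1}\|_{L^{p}(\om)}\le C\|u^{\nu}\|_{L^{2p}(\om)}^{2}$.

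For $P_{2}$ I would use the Poisson representation $P_{2}(x)=\int_{\p\om}K(x,y)P^{\nu}(y)\,dS(y)$ where $K$ is the Poisson kernel of $\om$. Since $K(x,\cdot)\ge 0$ and $\int_{\p\om}K(x,y)\,dS(y)=1$ (by applying the formula to the constant $1$), Jensen's inequality gives $|P_{2}(x)|^{p}\le\int_{\p\om}K(x,y)|P^{\nu}(y)|^{p}dS(y)$. Integrating in $x$ and using Fubini together with the fact that $\int_{\om}K(x,y)\,dx\le C$ uniformly in $y\in\p\om$ (a consequence of the $C^{2}$ regularity of $\p\om$ and the pointwise bound $K(x,y)\lesssim\dist(x,\p\om)/|x-y|^{3}$) produces $\|P_{2}\|_{L^{p}(\om)}\le C\|P^{\nu}|_{\p\om}\|_{L^{p}(\p\om)}$. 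Summing the two estimates yields \eqref{pp}.

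The main delicate point is justifying the double integration by parts for $P_{1}$, since we do not a priori know $P_{1}\in W^{1,p}$ and $u^{\nu}$ only has $H^{1}$-regularity. I would handle this by regularization: approximate $u^{\nu}$ by a sequence $u^{\nu}_{k}\in C^{\infty}_{c}(\om;\mathbb{R}^{3})$ with $u^{\nu}_{k}\to u^{\nu}$ in $L^{2p}(\om)$, solve the corresponding Dirichlet Poisson problems for $P_{1,k}$, run the duality calculation at the smooth level (where all boundary terms genuinely vanish), and pass to the limit using the linear bound on $P_{1,k}$ to obtain the claimed estimate for $P_{1}$, invoking uniqueness of the Dirichlet problem to identify the limit.
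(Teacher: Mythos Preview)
Your argument is correct and follows the standard route: decompose into a zero-boundary piece handled by duality against the $W^{2,p'}$ solution of the adjoint problem, and a harmonic-extension piece controlled via integrability of the Poisson kernel. The regularization step you include is the right way to make the formal integrations by parts rigorous.

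There is nothing to compare against in the paper itself: the lemma is simply quoted from \cite[Lemma 2]{Es17} without proof. Your write-up supplies a self-contained justification that the paper omits. One small remark: of the two boundary terms generated when you transfer $\div\div$ onto $\vp$, only the second genuinely requires $u^{\nu}|_{\p\om}=0$; the first is already killed by $\vp|_{\p\om}=0$. In fact, since $\vp|_{\p\om}=0$ forces $\na\vp$ to be normal on $\p\om$, the second boundary term reduces to $\int_{\p\om}(u^{\nu}\cdot n)^{2}\p_{n}\vp$, so the slip condition $u^{\nu}\cdot n=0$ would suffice --- useful if one wants the estimate for the Euler solution as well.
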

\begin{lemma}[Hardy-type embedding \cite{kuf}]\la{lemma2.3} Let $p\in[1,\infty)$ and $f\in W_{0}^{1,p}(\om).$  Then 
\bnn \left\|\frac{f(x)}{{\rm{dist}} (x,\p\om)} \right\|_{L^{p}(\om)}\le C\|\na f\|_{L^{p}(\om)},\enn  where $C$ depends on $p$ and $\om.$ 
\end{lemma}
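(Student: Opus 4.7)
\textbf{Proof plan for Lemma \ref{lemma2.3}.} My strategy is to reduce the multi-dimensional weighted estimate to the classical one-dimensional Hardy inequality via a boundary flattening, and then to combine local estimates by a partition of unity. First, by the density of $C_{c}^{\infty}(\om)$ in $W_{0}^{1,p}(\om)$, it suffices to prove the inequality for $f\in C_{c}^{\infty}(\om)$. Fix $\de_{0}>0$ small enough that the tubular neighborhood $\Ga_{\de_0}=\{x\in\om:\dist(x,\p\om)<\de_0\}$ admits the foot-of-the-perpendicular projection $\pi:\Ga_{\de_0}\to\p\om$ as a $C^{1}$ diffeomorphism onto its image (available because $\p\om\in C^{2}$, which gives bounded principal curvatures). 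Split the integral into the interior part over $\om^{\de_0}$ and the boundary part over $\Ga_{\de_0}$. On the interior, $\dist(x,\p\om)\ge \de_0$, so
\be\nonumber
\left\|\frac{f}{\dist(\cdot,\p\om)}\right\|_{L^{p}(\om^{\de_0})}\le \de_{0}^{-1}\|f\|_{L^{p}(\om)}\le C(\de_{0},\om)\,\|\na f\|_{L^{p}(\om)},
\ee
where the last step is the standard Poincar\'e inequality on $W_{0}^{1,p}(\om)$.

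For the boundary part, I would cover $\p\om$ by finitely many charts $\{V_{j}\}$, and on each $V_j$ choose a $C^{2}$ diffeomorphism $\Phi_{j}:V_{j}\cap\overline{\Ga_{\de_0}}\to U_{j}\times[0,\de_0)$ which flattens the boundary and sends the inward normal direction to the positive $y_n$-axis. Because $\p\om\in C^{2}$, a routine computation with the signed distance function yields two-sided bounds
\be\nonumber
c_{1}\,y_{n}\le \dist(\Phi_{j}^{-1}(y',y_{n}),\p\om)\le c_{2}\,y_{n},
\ee
with $c_{1},c_{2}$ depending only on $\om$. Using a subordinate partition of unity $\{\chi_{j}\}$ and setting $g_{j}:=(\chi_{j}f)\circ\Phi_{j}^{-1}$, I transfer the estimate to the half-cylinder. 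The function $g_j(y',\cdot)$ is compactly supported and vanishes at $y_{n}=0$ in the trace sense, so for a.e.\ $y'\in U_{j}$ the identity $g_{j}(y',y_{n})=\int_{0}^{y_{n}}\p_{n}g_{j}(y',t)\,dt$ together with the classical one-dimensional Hardy inequality yields
\be\nonumber
\int_{0}^{\de_0}\frac{|g_{j}(y',y_{n})|^{p}}{y_{n}^{p}}\,dy_{n}\le \left(\frac{p}{p-1}\right)^{p}\int_{0}^{\de_0}|\p_{n}g_{j}(y',t)|^{p}\,dt.
\ee
Integrating over the tangential slice $U_{j}$, pulling back by $\Phi_{j}$ (whose Jacobian is bounded above and below on $V_{j}$), and summing in $j$ then gives the bound on $\Ga_{\de_0}$ by $C(\om,p)\,\|\na f\|_{L^{p}(\om)}$, up to a commutator coming from derivatives of $\chi_{j}$ which is harmless since $|\na\chi_{j}|\in L^{\infty}$ and the resulting $\|f\|_{L^{p}}$ is reabsorbed through Poincar\'e.

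The main obstacle is handling the endpoint $p=1$, where the constant $(p/(p-1))^{p}$ from the one-dimensional Hardy step blows up; a direct proof for $p=1$ requires a separate layer-cake/coarea argument, writing $\int_{\Ga_{\de_0}}|f|/\dist(\cdot,\p\om)\,dx$ as a suitable integral of the level sets of $|f|$ against the surface measure on $\{\dist(\cdot,\p\om)=t\}$, then invoking the isoperimetric inequality together with $f|_{\p\om}=0$. A secondary technical point is to verify that the $C^{2}$ regularity of $\p\om$ is enough to guarantee both the comparability of $\dist(\cdot,\p\om)$ with the normal coordinate $y_{n}$ and the uniform bilipschitz property of the chart maps $\Phi_{j}$ on the thin slab $\Ga_{\de_0}$; this is where the hypothesis on $\om$ is genuinely used. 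Once these points are settled, combining the interior estimate, the boundary estimates, and a final application of Poincar\'e delivers the asserted inequality with constant $C=C(p,\om)$.
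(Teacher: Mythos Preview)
The paper does not supply a proof of this lemma; it is simply quoted from Kufner \cite{kuf}, so there is no ``paper's own proof'' to compare against. Your reduction for $p>1$ is the standard one and is correct: density, interior estimate via Poincar\'e, boundary flattening, slice-wise one-dimensional Hardy with constant $(p/(p-1))^{p}$, and reabsorption of the partition-of-unity commutator. Nothing is missing there.

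The genuine gap is at $p=1$, and it cannot be repaired by the layer-cake/coarea route you suggest, because the inequality as stated is \emph{false} for $p=1$. Already in one dimension on $\om=(0,1)$ take $f(x)=1/\log(e/x)$ near $0$, cut off smoothly to have compact support. Then
\[
\int_{0}^{1/2}|f'(x)|\,dx=\int_{0}^{1/2}\frac{dx}{x(\log(e/x))^{2}}<\infty,
\]
so $f\in W_{0}^{1,1}((0,1))$, yet
\[
\int_{0}^{1/2}\frac{|f(x)|}{\dist(x,\p\om)}\,dx=\int_{0}^{1/2}\frac{dx}{x\log(e/x)}=\infty.
\]
Hence no argument can produce the stated bound at $p=1$; the sharp substitute is a weak-type or logarithmically corrected estimate. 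In the present paper this causes no harm: Lemma~\ref{lemma2.3} is invoked only with $p=2$ (in \eqref{4.3}, \eqref{03.8j}, \eqref{3.2 3}), so the endpoint is never used. Your write-up should therefore restrict the statement to $p\in(1,\infty)$, after which your proof is complete.
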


\subsection{Boundary layer foliation}\label{subsec BL}
For $\alpha\in (\frac{1}{3},\,\frac{5}{6}),$ we define the following sequence
 \be\la{r0}\ba \b_{0}^{*}=0\quad {\rm and}\quad \b_{n}^{*}=\frac{1}{2(1-\alpha)}\left(1+\frac{1}{3}\b_{n-1}^{*}\right),  \quad n=1,2,3,\cdot\cdot\cdot.\ea\ee  
Clearly,  $\{\b_n^*\}$ is bounded and strictly increasing, and 
\bnn \b_{\infty}^{*} :=\lim_{n\rightarrow \infty}\b_{n}^{*}=\frac{3}{5-6\alpha}>1\quad{\rm if}\quad {1\over 3} < \alpha < \frac{5}{6}.\enn
Hence there exists some finite number
 \be\la{k11} N :=\left\{\ba &N(\alpha), &\alpha\in \left(\frac{1}{3},\frac{1}{2} \right];\\
 &1,&\alpha\in \left(\frac{1}{2},{5\over 6} \right)\ea\right. \ee
 such that \be\la{k10} 0=\b_{0}^{*}<\b_{1}^{*}<\b_{2}^{*}<\cdot\cdot\cdot< \b_{N-1}^{*}\le 1<\b_{N}^{*},\ee

In light of \eqref{r0}--\eqref{k10},  we   define, for any $\alpha\in (\frac{1}{3},1),$ an increasing  sequence  $\{\b_{n}\}_{n=1}^{N}$ such that 
\begin{align}\la{r1a}   
& 0= \b_{0}<\b_{1} <\b_{2}<\cdot\cdot\cdot< \b_{N-1}<\b_{N} :=1\quad{\rm and} \\
& \b_{n}< \frac{1}{2(1-\alpha)}\left(1+\frac{1}{3}\b_{n-1}\right), \label{beta inequ}
\end{align}
where    $N$ is given in  \eqref{k11}.

The purpose of introducing the sequence $\{ \beta_n \}$ is to design the following decomposition of the boundary layer. Note that, when  $n=1$, \eqref{beta inequ} reads $\beta_1 < {1\over 2(1 - \alpha)}$, which agrees with \eqref{DN beta}. Next we decompose the inner region of $\om$ as  
\be\la{rra} 
V_{1} :=  \om^{2\nu^{\b_{1}}},\quad    V_{n} :=\om^{2\nu^{\b_{n}}}-\om^{2\nu^{\b_{n-1}}+2\nu^{\b_{n}}} \,\,{\rm when}\,\, 2\le n\le N.
\ee
%Question: why a factor of 2?
See Figure \ref{fig:decomp}.
\begin{figure}[h!]
  \centering
  \includegraphics[page=1,scale=0.7]{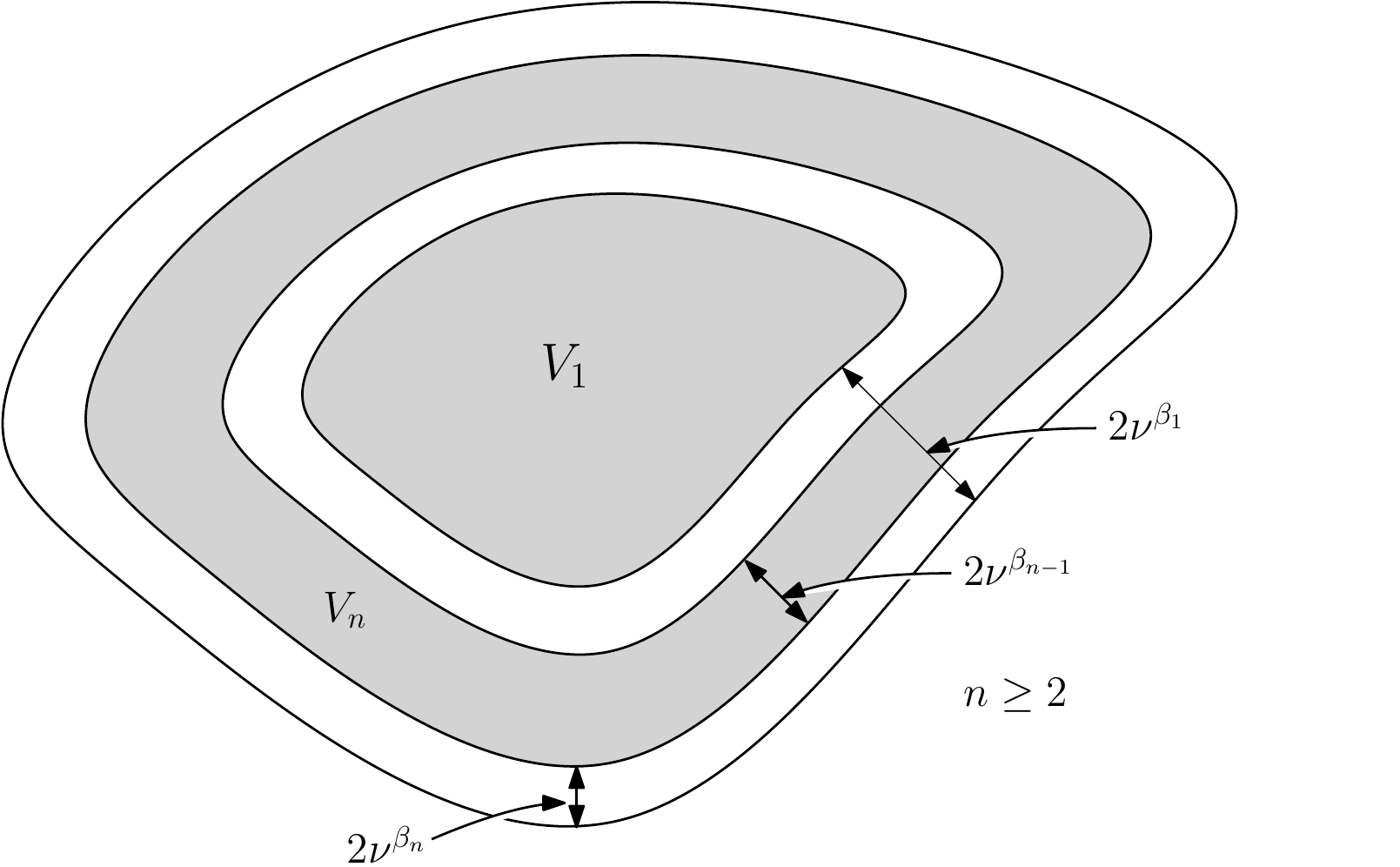}
  \caption{Decomposition of $\Omega$}
  \label{fig:decomp}
  %\vspace{-0.5in}
\end{figure}

It is easy to check that, for $\nu$ small enough, 
\be\la{rrp2}\ba {\rm meas}\, (V_{n}) \le C\nu^{\b_{n-1}},\quad{\rm meas}\,(V_{k}\cap V_{m})\le \left\{\ba &C\nu^{\b_{\max\{k,m\}}} \,\,\,{\rm if}\,\,\,|k-m|=1,\\
&0\,\,\,{\rm if}\,\,\,|k-m|>1.\ea\right. \ea
\ee
See Figure \ref{fig:foliation}.
\begin{figure}[h!]
  \centering
  \vspace{-4ex}
  \includegraphics[page=2,scale=0.87]{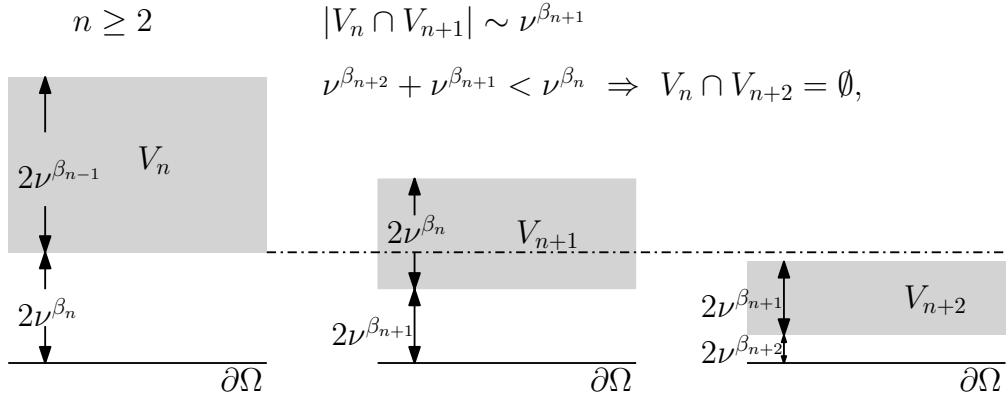}
  \vspace{-14ex}
  \caption{Foliation of $\p\Omega$}
  \label{fig:foliation}
\end{figure}

With the above decomposition, we have the following:
 \begin{proposition}\la{pp1} Let  $\{\xi_{n}\}_{n=1}^{N}$  be a  $C^{1}$ partition of unity   subordinate to  $\{V_{n}\}_{n=1}^{N}$  such that \be\la{u2}\ba  {\rm spt}\, \xi_{n}\subset V_{n},\quad 0\le \xi_{n}\le 1,\quad \sum_{n=1}^{N}\xi_{n}=1.\ea\ee
 Then, it follows that for $0\le n \le N$,
\be\la{ll1} \ba\na \left(\xi_{n}+\xi_{n+1}\right)^2 =0\,\,\,\text{ if}\,\,\,\,x\in V_{n}\cap V_{n+1},\quad{\rm and}\quad \na \xi_{n}=0\,\,\,\text{ if}\,\,\,\,x\in V_{n} \backslash \cup_{i\neq n} V_i,\ea\ee 
where we define
\be\la{y8} V_{N+1} := \left(\cup_{n=1}^{N}V_{n}\right)^{c}\quad{\rm and}\quad V_{0} :=\varnothing.\ee
\end{proposition}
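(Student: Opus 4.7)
The plan is to read both identities off the pointwise structure of the cover $\{V_n\}$: combining the disjointness \eqref{rrp2} of non-adjacent members with the support constraint and the normalization in \eqref{u2}, I expect that on a small enough open neighborhood of each $x$ in question, only one or two of the $\xi_i$ can be nonzero, which forces their sum to be identically $1$ there and hence kills the gradient.

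For the second identity, I would fix $x \in V_n \setminus \bigcup_{i \ne n} V_i$. Since $\mathrm{spt}\,\xi_i \subset V_i$ and each support is closed, the complement $(\mathrm{spt}\,\xi_i)^c$ is open, and the finite intersection $U := \bigcap_{i \ne n} (\mathrm{spt}\,\xi_i)^c$ is an open neighborhood of $x$. On $U$, every $\xi_i$ with $i \ne n$ vanishes identically, so the normalization in \eqref{u2} forces $\xi_n \equiv 1$ on $U$, and therefore $\nabla \xi_n(x) = 0$.

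For the first identity I would apply the same mechanism, but with one more admissible index. If $x \in V_n \cap V_{n+1}$, then the non-adjacent disjointness in \eqref{rrp2} yields $x \notin V_k$ for every $k \notin \{n, n+1\}$, and taking $U := \bigcap_{k \notin \{n,n+1\}} (\mathrm{spt}\,\xi_k)^c$ gives an open neighborhood of $x$ on which all $\xi_k$ with $k \notin \{n,n+1\}$ vanish. The normalization in \eqref{u2} then gives $\xi_n + \xi_{n+1} \equiv 1$ on $U$, whence $(\xi_n + \xi_{n+1})^2 \equiv 1$ on $U$ and its gradient vanishes at $x$. The edge cases $n = 0$ and $n = N$ are vacuous under the conventions \eqref{y8}, since $V_0 = \varnothing$ and $V_N \cap V_{N+1}$ is empty (with $V_N \subset \omega^{2\nu}$ while $V_{N+1} = \Gamma_{2\nu}$).

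I do not anticipate a genuine obstacle here: the proposition is essentially a topological corollary of the geometric structure of $\{V_n\}$ already recorded in \eqref{rrp2}, coupled with the standard properties of a partition of unity, and no quantitative estimation enters. The only step that requires any care is the passage from ``$x \notin V_i$'' to ``$\xi_i$ vanishes in an open neighborhood of $x$,'' which is exactly where the closedness of supports is used.
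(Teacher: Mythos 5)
Your proof is correct and follows essentially the same route as the paper's: all $\xi_i$ other than the one or two admissible indices vanish near the point in question, so the normalization $\sum_n\xi_n=1$ forces the relevant sum to equal $1$ there, and its gradient vanishes. The paper compresses this into one line (``differentiating'' the identities $(\xi_n+\xi_{n+1})\big|_{V_n\cap V_{n+1}}\equiv 1$ and $\xi_n\big|_{V_n\setminus\cup_{i\neq n}V_i}\equiv 1$); your extra step of extracting an open neighborhood from the closedness of the supports is precisely the detail that makes that differentiation legitimate.
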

\begin{proof}
From \eqref{u2} and \eqref{rrp2} we know that 
\[
\left(\xi_{n}+\xi_{n+1}\right)\big|_{V_{n}\cap V_{n+1}} \equiv 1, \qquad \xi_n \big|_{V_{n} \backslash \cup_{i\neq n} V_i} \equiv 1.
\]
Therefore \eqref{ll1} follows trivially by differentiating the above.
\end{proof}

\section{Regularization and resolved energy balance}\label{sec reg}
%Ming: section title needs to be renamed.
For $n = 1, \ldots, N$, define
\be\la{k1} \overline{f_{n}}(x,t) :=\left\{\ba & \int \eta_{\nu^{\b_{n}}}(x-y)f(y,t)dy, &x\in V_{n} \cap V_{n+1}^{c},\\
& \int \eta_{\nu^{\b_{n+1}}}(x-y)f(y,t)dy, &x\in V_{n} \cap V_{n+1}.\ea\right.\ee
From \eqref{rrp2} we know that
\be\la{f eq}
\overline{f_n} = \overline{f_{n+1}} \quad \text{on} \quad V_n \cap V_{n+1}.
\ee
From \eqref{2.12}-\eqref{b5}, it holds that, 
 for $p\in [1,\infty)$, 
 \be\la{j}   \|\na \overline{f_{n}}\|_{L^{p}(V_{n} )}\le \left\{\ba & C  \nu^{\b_{n}(\alpha-1)} \|f\|_{B_{p}^{\alpha,\infty}(\om^{\nu})},&x\in V_{n}\cap V_{n+1}^{c},\\
  & C  \nu^{\b_{n+1}(\alpha-1)} \|f\|_{B_{p}^{\alpha,\infty}(\om^{\nu})},&x\in V_{n}\cap V_{n+1}, \ea\right.\ee
 and 
\be\la{k2}   \| \overline{f_{n}}-f\|_{L^{p}(V_{n} )}\le \left\{\ba & C  \nu^{\alpha\b_{n}} \|f\|_{B_{p}^{\alpha,\infty}(\om^{\nu})},&x\in V_{n}\cap V_{n+1}^{c},\\
& C  \nu^{\alpha\b_{n+1}} \|f\|_{B_{p}^{\alpha,\infty}(\om^{\nu})},&x\in V_{n}\cap V_{n+1}.\ea\right.\ee

With \eqref{k1},  we deduce from \eqref{1ns}  that  
\bnn\ba & \p_{t}\overline{u^{\nu}_{n}} + \div \overline{(u^{\nu}\otimes u^{\nu})_{n}} + \na \overline{P^{\nu}_{n}} =\nu \lap \overline{u^{\nu}_{n}},\quad \forall\,\,x\in V_{n}\quad(n=1, \ldots, N).\ea
\enn
Multiplying it  by $\xi_{n}$ and summing up implies that  
\be\la{3.1}\p_{t}\left(\sum_{n=1}^{N}\xi_{n} \overline{u^{\nu}_{n}}\right)+ \sum_{n=1}^{N}\xi_{n} \div \overline{ (u^{\nu} \otimes u^{\nu})_n}+ \sum_{n=1}^{N}\xi_{n}\na \overline{P^{\nu}_{n}} =\nu\sum_{n=1}^{N}\xi_{n}\lap \overline{u^{\nu}_{n}},\quad \forall \,\,x\in \om\backslash \Ga_{2\nu}.
\ee
To deal with the boundary contribution,   we introduce a smooth  cut-off function $\th(x)$ in $\om$ such that \be\la{3.2} 0\le \th(x)\le 1,\quad \th(x)=1\,\,\ {\rm if}\,\, \,\, x\in \om^{4\nu},\quad   \th(x)=0\,\,\ {\rm if}\,\, x\notin  \om^{2\nu}, \quad \text{and}\ \ |\na \th|\le  4\nu^{-1}.\ee

Next we wish to test \eqref{3.1} by   $\th(x)\left(\sum_{n=1}^{N}\xi_{n} \overline{u^{\nu}_{n}}\right)$ to derive the resolved energy balance. However this test function fails to be solenoidal, and hence cannot be used as a legitimate test field for Leray-Hopf solutions. Therefore to make our argument work we must appeal to the following theorem:

\begin{theorem}[Theorem 1, \cite{mauro}] \label{thm_mauro} %[Theorem 1, \cite{mauro}] %{mauro})  %(\cite[Theorem 1]{mauro})
Assume that $\om$ is  an  open, bounded domain with   $C^{2}$ boundary $\p \om$, and $u$ is a Leray--Hopf solution of \eqref{1ns}. Then there exists a pressure field $P \in L^r(0,T; W^{1,s}(\om))$ with
\be\la{range}
{3\over s} + {2\over r} = 4, \qquad {4\over 3} < s < {3\over2},
\ee
such that for all $\varphi \in C^\infty_0((0, T)\times \om)$,
\begin{equation*}
\int^T_0\int_\om \left( u\cdot \partial_t\varphi + u\otimes u : \nabla \varphi + P \div\varphi + \nu u \cdot \Delta \varphi \right)\,dxdt = 0.
\end{equation*}
\end{theorem}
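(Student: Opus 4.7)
My plan is to reconstruct the pressure from the Leray--Hopf weak formulation in three conceptual steps: existence as a distribution via de Rham's theorem, quantitative regularity via an elliptic problem for $P$, and verification of the integral identity against arbitrary (non-solenoidal) test fields. Since $u$ is a Leray--Hopf solution, the standard weak formulation holds for every divergence-free test field $\psi\in C^{\infty}_{0,\sigma}((0,T)\times\om)$, so the vector-valued distribution
\[
\mathcal{F}:=\p_t u+\div(u\otimes u)-\nu\lap u
\]
annihilates all divergence-free test fields. By de Rham's theorem there exists a distribution $P(t,\cdot)$, unique up to an additive time-dependent constant, such that $\mathcal{F}=-\na P$ in $\mathcal{D}'((0,T)\times\om)$. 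Once quantitative regularity for $P$ is in hand, pairing the identity $\mathcal{F}+\na P=0$ with any $\vp\in C^{\infty}_{0}((0,T)\times\om)$ and integrating by parts (boundary terms vanishing by the compact support of $\vp$) will produce the integral identity in the statement.

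To quantify that regularity I would take the divergence of the momentum equation, obtaining $-\lap P=\p_i\p_j(u_iu_j)$ in $\om$ with the Neumann datum $\p_n P=\nu(\lap u\cdot n)|_{\p\om}$ inherited from the no-slip condition (which annihilates both $\p_t u$ and $u\otimes u$ on $\p\om$). Combining the Ladyzhenskaya--Prodi--Serrin interpolation $u\in L^{r_0}(0,T;L^{s_0}(\om))$ with $\frac{3}{s_0}+\frac{2}{r_0}=\frac{3}{2}$, Calder\'on--Zygmund theory for the Neumann Laplacian on the $C^{2}$ domain $\om$, and the Sobolev embedding $W^{1,s}\hookrightarrow L^{s^{*}}$ with $\frac{1}{s^{*}}=\frac{1}{s}-\frac{1}{3}$, a direct scaling count relating the integrability of $u\otimes u$ to $\na P$ yields the relation $\frac{3}{s}+\frac{2}{r}=4$ precisely in the claimed range $s\in(\frac{4}{3},\frac{3}{2})$.

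The main obstacle is the rigorous treatment of the Neumann datum $\nu\lap u\cdot n$: for a Leray--Hopf solution $u$ lies only in $L^{2}(0,T;H^{1}(\om))$, so $\lap u\cdot n$ is at best a distribution on $\p\om$ with very limited time integrability, and it is precisely this low trace regularity that pins the exponent window to $s\in(\frac{4}{3},\frac{3}{2})$, $r\in(1,\frac{8}{7})$ and explains why only $\na P$ (not $P$ itself) can be placed in a standard Lebesgue--Sobolev space. I would circumvent this by recasting the boundary datum dually---testing against solutions of an auxiliary Stokes problem on $\om$---thereby replacing the pointwise trace $\lap u\cdot n$ by a duality pairing controlled by $\|u\|_{L^{2}_{t}H^{1}_{x}}$. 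This same duality is what reconciles the de Rham step with the non-solenoidal weak formulation of Navier--Stokes and closes the argument.
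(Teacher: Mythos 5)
The paper does not prove this statement: it is imported verbatim as Theorem~1 of the cited reference \cite{mauro} and used as a black box to justify testing the regularized equation with a non-solenoidal field. So there is no in-paper argument to compare routes with; your proposal has to stand on its own as a proof of the quoted result.

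As such it is a reasonable outline of the standard strategy (de Rham, then an elliptic problem for $P$, then verification against arbitrary test fields), and your scaling count is correct: $\nabla P$ inherits the integrability of $u\cdot\nabla u$, and interpolating $u\in L^{\infty}_tL^2_x\cap L^2_tH^1_x$ against $\nabla u\in L^2_{t,x}$ gives exactly ${2\over r}+{3\over s}=4$ with $s\le{3\over2}$. But the proof has a genuine gap precisely at the point you yourself flag as the main obstacle. First, the de Rham step only produces, for a.e.\ fixed $t$, a distribution $P(t,\cdot)$ defined up to a constant, with no joint measurability or time integrability; since $\partial_t u$ lives only in a negative-order space, one must first integrate the equation in time (or work with the Stokes semigroup / a decomposition of $P$ into parts generated by $\partial_t u$, $\div(u\otimes u)$ and $\nu\Delta u$ separately) before any quantitative estimate can be extracted. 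Second, the Neumann problem $-\Delta P=\partial_i\partial_j(u_iu_j)$, $\partial_nP=\nu(\Delta u\cdot n)|_{\partial\Omega}$ is not a legitimate starting point for a Leray--Hopf solution, since $\Delta u\cdot n$ has no trace; you propose to ``circumvent this by recasting the boundary datum dually --- testing against solutions of an auxiliary Stokes problem,'' but that duality argument is the entire content of the theorem: one must construct the auxiliary (stationary Stokes or Bogovskii-type) test fields, prove the bound $\|\nabla P\|_{L^r_tL^s_x}\le C\bigl(\|u\|_{L^\infty_tL^2_x}+\|u\|_{L^2_tH^1_x}\bigr)$ from it, and only then deduce the integral identity for all $\varphi\in C^\infty_0((0,T)\times\Omega)$. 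None of this is executed, so the proposal is a correct roadmap rather than a proof. (A minor point: the theorem does place $P$ itself, not only $\nabla P$, in a Lebesgue--Sobolev space, namely $L^r_tW^{1,s}_x$, once the additive constant is normalized, e.g.\ by zero mean.)
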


This way we can multiply \eqref{3.1} by   $\th(x)\left(\sum_{n=1}^{N}\xi_{n} \overline{u^{\nu}_{n}}\right)$ and integrate over $\Omega \times [0, T]$.  This then leads to
\be\la{3.3}  \ba
&\frac{1}{2}\int_{\om}\th \left(\sum_{n=1}^{N}\xi_{n} \overline{u^{\nu}_{n}}\right)^{2}(x,T)\,dx-\frac{1}{2}\int_{\om}\th \left(\sum_{n=1}^{N}\xi_{n} \overline{u^{\nu}_{n}}\right)^{2}(x,0)\,dx\\
&=\int_{0}^{T}\int_{\om}\th \left(\sum_{n=1}^{N}\xi_{n} \overline{u^{\nu}_{n}}\right)\left(\sum_{n=1}^{N} \xi_{n}\nu \lap \overline{u^{\nu}_{n}} -\sum_{n=1}^{N} \xi_{n} \div \overline{ (u^{\nu} \otimes u^{\nu})_{n}} -\sum_{n=1}^{N} \xi_{n}\na \overline{P^{\nu}_{n}}  \right)\,dxdt.\ea\ee

The main result of this section is the following.
\begin{proposition}[Resolved energy balance] \la{p}
Under the same hypotheses as in  Theorem \ref{thm0}, it holds that 
\be\la{9}\lim_{\nu\rightarrow 0} \int_{0}^{T}\int_{\om}\th \left(\sum_{n=1}^{N}\xi_{n} \overline{u^{\nu}_{n}}\right)\left(\sum_{n=1}^{N} \xi_{n}\nu \lap \overline{u^{\nu}_{n}} -\sum_{n=1}^{N} \xi_{n} \div \overline{ (u^{\nu} \otimes u^{\nu})_{n}} -\sum_{n=1}^{N} \xi_{n}\na \overline{P^{\nu}_{n}}  \right)\,dxdt=0.\ee
  \end{proposition}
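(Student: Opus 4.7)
The plan is to split the right-hand side of \eqref{9} into viscous, convective, and pressure contributions and show each vanishes as $\nu\to 0$. Set $U:=\sum_{n=1}^{N}\xi_{n}\overline{u^\nu_n}$; by \eqref{f eq} and Proposition \ref{pp1}, $U$ coincides locally with a single mollification at scale $\nu^{\beta_n}$ inside $V_n\setminus\cup_{i\ne n}V_i$ and at scale $\nu^{\beta_{n+1}}$ on the overlap $V_n\cap V_{n+1}$. In particular, on every overlap the two neighboring convolutions agree, so the partition-of-unity gradients $\nabla\xi_n$ act across leaves whose mollification scales match. This is the cancellation that is central to the argument.

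For the viscous piece, integrate by parts in $x$. The bulk contribution reduces to a weighted sum of $\nu\int_{0}^{T}\!\int_{V_n}|\nabla\overline{u^\nu_n}|^2\,dx\,dt$, which by the pointwise bound \eqref{j}, H\"older's inequality and the measure estimate $|V_n|\lesssim \nu^{\beta_{n-1}}$ from \eqref{rrp2}, scales as $\nu^{1+\beta_{n-1}/3+2\beta_n(\alpha-1)}\int_{0}^{T}\|u^\nu\|_{B^{\alpha,\infty}_{3}(\om^\nu)}^{2}\,dt$; this vanishes precisely because of the design inequality \eqref{beta inequ}. Terms generated by $\nabla\theta$ live in $\om^{2\nu}\setminus\om^{4\nu}\subset\Ga_{4\nu}$ with $|\nabla\theta|\lesssim \nu^{-1}$ and, after Cauchy--Schwarz, are absorbed by combining the boundary dissipation hypothesis \eqref{0j1} with the $L^{4}_{t}L^{\infty}_{x}$ bound in \eqref{02h2}.

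For the convective piece, decompose $\overline{(u^\nu\otimes u^\nu)_n}=\overline{u^\nu_n}\otimes\overline{u^\nu_n}+R_n$, where Lemma \ref{lem2.1} gives $\|R_n\|_{L^{3/2}(V_n)}\lesssim \nu^{2\alpha\beta_n}\|u^\nu\|_{B^{\alpha,\infty}_3(\om^\nu)}^2$. After integration by parts the commutator pairs with either $\nabla U$ (of size $\nu^{\beta_n(\alpha-1)}$ by \eqref{j}) or $\nabla(\theta\xi_n)$, both yielding favorable powers of $\nu$ under \eqref{beta inequ}. The principal transport term $\theta U\cdot\xi_n\,\div(\overline{u^\nu_n}\otimes\overline{u^\nu_n})$ is treated using $\div\overline{u^\nu_n}=0$ and a further integration by parts, reducing it to contributions of the form $\nabla(\theta\xi_n)\cdot\overline{u^\nu_n}\,|\overline{u^\nu_n}|^2$; thanks to \eqref{f eq} and $\nabla(\xi_n+\xi_{n+1})=0$ from \eqref{ll1}, the potentially dangerous $|\nabla\xi_n|\sim \nu^{-\beta_n}$ contributions cancel between consecutive leaves, leaving only boundary remainders controlled by \eqref{02h2}.

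For the pressure piece, decompose $\sum_{n}\xi_n\nabla\overline{P^\nu_n}=\nabla\bigl(\sum_{n}\xi_n\overline{P^\nu_n}\bigr)-\sum_{n}\overline{P^\nu_n}\nabla\xi_n$: by \eqref{f eq} and \eqref{ll1} the second sum is supported on overlaps contained in $\Ga_{4\nu}$ and is bounded using Lemma \ref{lemma2.2} together with \eqref{02h2}. Since $\theta U$ is not divergence-free, the integral against the first term is only legitimate after invoking Theorem \ref{thm_mauro}; integration by parts then produces a $\div(\theta U)$ factor whose error, being the defect of mollifying a solenoidal field, is matched again by $\nabla\theta$ and $\nabla\xi_n$ terms estimated via Lemma \ref{lemma2.2} and \eqref{02h2}. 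The main obstacle throughout is the simultaneous balance of four small scales---the mollification width $\nu^{\beta_n}$, the partition-of-unity gradient size $\nu^{-\beta_n}$, the boundary-layer width $\nu$, and the leaf measure $\nu^{\beta_{n-1}}$---so that \eqref{beta inequ} alone suffices; this is exactly what the multi-scale foliation of Section \ref{subsec BL} is designed to accomplish, since a single-scale mollification would force a boundary layer strictly thicker than $O(\nu)$, as in \cite{DrivasNguyen2018}.
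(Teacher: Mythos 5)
Your proposal follows essentially the same route as the paper: split \eqref{9} into viscous, convective, and pressure pieces, exploit the matching of neighboring mollification scales on overlaps (\eqref{f eq}, \eqref{ll1}) to cancel the $\nabla\xi_n$ contributions, use \eqref{j}, \eqref{k2} and Lemma \ref{lem2.1} with the design inequality \eqref{beta inequ} for the interior terms, and control the $\nabla\theta$ boundary remainders via the Hardy inequality together with \eqref{02h2} and \eqref{0j1}. The only cosmetic difference is in bookkeeping: for the $\nabla\theta$ term of the viscous piece the paper uses the Besov bound \eqref{h1} plus Hardy and \eqref{0j1} rather than the $L^4_tL^\infty_x$ bound, but your variant works equally well.
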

The proof of Proposition \ref{p} is a direct consequence of Lemma \ref{lemma3.1} -- \ref{lemma3.3}  below.

\begin{lemma}[Resolved dissipation]\la{lemma3.1} 
Under the same hypotheses as in  Theorem \ref{thm0}, we have
\be\la{4.1} \lim_{\nu\rightarrow 0} \int_{0}^{T}\int_{\om}\th \left(\sum_{n=1}^{N}\xi_{n} \overline{u^{\nu}_{n}}\right)\left(\sum_{n=1}^{N} \xi_{n}\nu \lap \overline{u^{\nu}_{n}}\right)\,dxdt =0.\ee\end{lemma}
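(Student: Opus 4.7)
The plan is to integrate by parts in the spatial Laplacian; since the cut-off $\theta$ is supported in $\Omega^{2\nu}$ (cf.\ \eqref{3.2}), no boundary terms appear, and the integral in \eqref{4.1} decomposes as $\nu J_1 + \nu J_2 + \nu J_3$, with
\begin{equation*}
J_1 := -\!\!\int (\nabla \theta) f \cdot \!\sum_n \xi_n \nabla \overline{u^\nu_n}, \quad J_2 := -\!\!\int \theta\, \nabla f : \sum_n \xi_n \nabla \overline{u^\nu_n}, \quad J_3 := -\!\!\int \theta\, f \cdot \!\sum_n \nabla \xi_n \cdot \nabla \overline{u^\nu_n},
\end{equation*}
where $f := \sum_n \xi_n \overline{u^\nu_n}$. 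I will then show $\nu J_3 \equiv 0$, $\nu J_2 \to 0$, and $\nu J_1 \to 0$.

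The pivotal cancellation is $J_3 \equiv 0$. Outside the overlaps each $\nabla \xi_n$ vanishes by \eqref{ll1}, while on an overlap $V_n \cap V_{n+1}$ only $\xi_n, \xi_{n+1}$ are active with $\xi_n + \xi_{n+1} \equiv 1$, so $\nabla \xi_n + \nabla \xi_{n+1} = 0$; together with the matching $\overline{u^\nu_n} = \overline{u^\nu_{n+1}}$ of \eqref{f eq}, the two surviving terms of $\sum_m \nabla \xi_m \cdot \nabla \overline{u^\nu_m}$ cancel. The same mechanism eliminates the cross-term produced in $J_2$ by expanding $\nabla f = \sum_m \nabla \xi_m\, \overline{u^\nu_m} + \sum_m \xi_m\, \nabla \overline{u^\nu_m}$, leaving
\begin{equation*}
-\nu J_2 = \nu \!\int_0^T\!\!\int \theta \sum_{n,m} \xi_n \xi_m \nabla \overline{u^\nu_n} : \nabla \overline{u^\nu_m} \, dx\,dt,
\end{equation*}
which, again by \eqref{f eq}, collapses on each region to a single $\nu \int \theta |\nabla \overline{u^\nu}|^2$ at one mollification scale $\nu^{\beta_k}$.

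For this main interior piece I combine the Besov bound \eqref{j} in $L^3_x$, H\"older in space using ${\rm meas}(V_n) \lesssim \nu^{\beta_{n-1}}$ for the slabs and ${\rm meas}(V_{n-1} \cap V_n) \lesssim \nu^{\beta_n}$ for the overlaps, and H\"older in time against the uniform bound \eqref{h1}, to arrive at interior contributions of order $\nu^{1 + \beta_{n-1}/3 + 2 \beta_n (\alpha - 1)}$ and overlap contributions of order $\nu^{1 + \beta_n (2\alpha - 5/3)}$. Both exponents are strictly positive precisely because the foliation $\{\beta_n\}$ has been engineered to satisfy \eqref{beta inequ} with $\beta_n \le 1 < \beta^*_\infty$, so the sum of finitely many such pieces vanishes as $\nu \to 0$.

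For the boundary piece $\nu J_1$, I use $\nu |\nabla \theta| \le C$, the $L^p$-contraction of the mollifier (to replace $\|\sum_n \xi_n \nabla \overline{u^\nu_n}\|_{L^2(\Gamma_{4\nu})}$ by $\|\nabla u^\nu\|_{L^2(\Gamma_{C\nu})}$ on a slightly enlarged strip, and similarly for the $L^\infty$ norm of $f$), and Cauchy--Schwarz in space-time to bound
\begin{equation*}
|\nu J_1| \le C\, \|f\|_{L^2_t L^\infty_x(\Gamma_{C\nu})} \left( \nu \!\int_0^T\!\!\int_{\Gamma_{C\nu}}\!\!|\nabla u^\nu|^2 \, dx\,dt \right)^{1/2},
\end{equation*}
whose first factor is uniformly bounded by \eqref{02h2} and whose second factor tends to zero by the Kato-type hypothesis \eqref{0j1}. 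The main obstacle is keeping the worst-case interior exponent $1 + \beta_{n-1}/3 + 2\beta_n(\alpha - 1)$ positive all the way up to $n = N$ with $\beta_N = 1$; this is exactly what the recursive construction \eqref{r0}--\eqref{beta inequ} is designed to guarantee at Onsager's critical regularity $\alpha = \tfrac{1}{3}+$, which is what enables the $O(\nu)$ boundary layer thickness in contrast with the $O(\nu^{3/4+})$ thickness of \cite{DrivasNguyen2018}.
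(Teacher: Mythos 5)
Your proposal is correct and follows essentially the same route as the paper: integrate by parts, kill the $\na\xi_n$ contributions pointwise via the gradient cancellation of Proposition \ref{pp1} together with the matching property \eqref{f eq}, estimate the resolved dissipation term leaf by leaf using \eqref{j}, \eqref{rrp2} and \eqref{beta inequ} to obtain exactly the exponents $1+\tfrac{1}{3}\b_{n-1}+2\b_n(\alpha-1)$ and $1+\b_n(2\alpha-\tfrac{5}{3})$ appearing in \eqref{4.5}--\eqref{4.5s}, and control the $\na\th$ term by \eqref{0j1}. The only (harmless) deviation is in that last term, where you invoke the $L^4_tL^\infty_x$ bound \eqref{02h2} directly on $f$ in place of the paper's splitting $\overline{u^{\nu}_{N}}=(\overline{u^{\nu}_{N}}-u^{\nu})+u^{\nu}$ combined with the Hardy-type inequality of Lemma \ref{lemma2.3}.
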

\begin{proof}
 Owing to   \eqref{rrp2} and \eqref{u2}, we find
\be\la{p21}  \xi_{k}\xi_{m}=0\quad {\rm if}\quad |k-m|\ge 2.\ee 
Integration by parts then gives   
 \be\ba\la{4.2} 
 &\nu\int_{0}^{T}\int_{\om}\th \left(\sum_{n=1}^{N}\xi_{n} \overline{u^{\nu}_{n}}\right)\left(\sum_{n=1}^{N} \xi_{n} \lap \overline{u^{\nu}_{n}}\right)\,dxdt =\nu\sum_{k,m=1}^{N} \int_{0}^{T}\int_{\om} \th  \xi_{k} \xi_{m} \overline{u^{\nu}_{k}}\lap \overline {u_{m}^{\nu}} \,dxdt \\
&=- \nu\sum_{|k-m|\le 1}  \int_{0}^{T}\int_{\om}  \th \xi_{k} \xi_{m} \na\overline{u^{\nu}_{k}}\na  \overline {u_{m}^{\nu}} \,dxdt - \nu\sum_{|k-m|\le 1}  \int_{0}^{T}\int_{\om}\na \th \xi_{k} \xi_{m}  \overline{u^{\nu}_{k}} \na  \overline {u_{m}^{\nu}} \,dxdt\\
&\quad- \nu\sum_{|k-m|\le 1}  \int_{0}^{T}\int_{\om}\th\na( \xi_{k} \xi_{m}) \overline{u^{\nu}_{k}} \na  \overline {u_{m}^{\nu}} \,dxdt. \ea\ee

The  terms on the right side of   \eqref{4.2} are  treated as follows.
First, it follows from  \eqref{h1},  \eqref{rrp2}, \eqref{u2},    \eqref{j}   that, if $|k-m|=0$,
% \be\la{4.5}\ba  
% &\left| \nu  \int_{0}^{T}\int_{\om} \th \xi_{k}^{2}   \na\overline{u^{\nu}_{k}} \na \overline {u_{k}^{\nu}} \right| =\left| \nu  \int_{0}^{T}\left(\int_{V_{k}\cap V_{k+1}}+\int_{V_{k}\cap V_{k+1}^{c}}\right) \th \xi_{k}^{2}   \na\overline{u^{\nu}_{k}} \na \overline {u_{k}^{\nu}} \right|\\
% &\le C   \nu \int_{0}^{T}\left( \|\xi_{k}^{2}\|_{L^{3}(V_{k}\cap V_{k+1})}\|\na \overline{u^{\nu}_{k}}\|_{L^{3}(V_{k}\cap V_{k+1})}^{2} +\|\xi_{k}^{2}\|_{L^{3}(V_{k}\cap V_{k+1}^{c})}\|\na \overline{u^{\nu}_{k}}\|_{L^{3}(V_{k}\cap V_{k+1}^{c})}^{2} \right)\\
% &\le C  \nu\left( \nu^{\frac{1}{3}\b_{k+1}} \int_{0}^{T}  \|\na \overline{u^{\nu}_{k}}\|_{L^{3}(V_{k}\cap V_{k+1})}^{2} +\nu^{\frac{1}{3}\b_{k-1}} \int_{0}^{T}  \|\na \overline{u^{\nu}_{k}}\|_{L^{3}(V_{k}\cap V_{k+1}^{c})}^{2} \right) \\
%  &\le C  \nu\left( \nu^{\frac{1}{3}\b_{k+1}+2\b_{k+1}(\alpha-1)} +\nu^{\frac{1}{3}\b_{k-1}+2\b_{k}(\alpha-1)}\right) \int_{0}^{T} \|u^{\nu}\|_{B_{3}^{\alpha,\infty}(\om^{\nu})}^{2}  \\
% &\le C \nu^{1+\frac{1}{3}\b_{k-1}+2\b_{k}(\alpha-1)}. \ea\ee
% 
\begin{align}
&\left| \nu  \int_{0}^{T}\int_{\om} \th \xi_{k}^{2}   \na\overline{u^{\nu}_{k}} \na \overline {u_{k}^{\nu}} \,dxdt \right| =\left| \nu  \int_{0}^{T}\left(\int_{V_{k}\cap V_{k+1}}+\int_{V_{k}\cap V_{k+1}^{c}}\right) \th \xi_{k}^{2}   \na\overline{u^{\nu}_{k}} \na \overline {u_{k}^{\nu}} \,dxdt \right| \nonumber \\
 &\le C   \nu \int_{0}^{T}\left( \|\xi_{k}^{2}\|_{L^{3}(V_{k}\cap V_{k+1})}\|\na \overline{u^{\nu}_{k}}\|_{L^{3}(V_{k}\cap V_{k+1})}^{2} +\|\xi_{k}^{2}\|_{L^{3}(V_{k}\cap V_{k+1}^{c})}\|\na \overline{u^{\nu}_{k}}\|_{L^{3}(V_{k}\cap V_{k+1}^{c})}^{2} \right) \,dt \nonumber\\
 &\le C  \nu\left( \nu^{\frac{1}{3}\b_{k+1}} \int_{0}^{T}  \|\na \overline{u^{\nu}_{k}}\|_{L^{3}(V_{k}\cap V_{k+1})}^{2} \,dt +\nu^{\frac{1}{3}\b_{k-1}} \int_{0}^{T}  \|\na \overline{u^{\nu}_{k}}\|_{L^{3}(V_{k}\cap V_{k+1}^{c})}^{2} \,dt \right) \nonumber \\
  &\le C  \nu\left( \nu^{\frac{1}{3}\b_{k+1}+2\b_{k+1}(\alpha-1)} +\nu^{\frac{1}{3}\b_{k-1}+2\b_{k}(\alpha-1)}\right) \int_{0}^{T} \|u^{\nu}\|_{B_{3}^{\alpha,\infty}(\om^{\nu})}^{2} \,dt \la{4.5}  \\
 &\le C \nu^{1+\frac{1}{3}\b_{k-1}+2\b_{k}(\alpha-1)}. \nonumber
\end{align} 
If  $|k-m|=1,$
  \be\la{4.5s}\ba 
  & \left| - \nu \int_{0}^{T}\int_{\om} \th \xi_{k} \xi_{m} \na\overline{u^{\nu}_{k}} \na \overline {u_{m}^{\nu}} \,dxdt \right|\\
 & \quad \le C  \nu \int_{0}^{T}\|\na \overline{u^{\nu}_{k}}\|_{L^{3}(V_{k}\cap V_{m})} \|\na \overline{u^{\nu}_{m}}\|_{L^{3}(V_{k}\cap V_{m})} \|\xi_{k}\xi_{m}\|_{L^{3}(V_{k}\cap V_{m})} \,dt \\
  & \quad \le C   \nu^{1+\b_{\max\{k,m\}}\left(\frac{1}{3}+2(\alpha-1)\right)} \int_{0}^{T} \|u^{\nu}\|_{B_{3}^{\alpha,\infty}(\om^{\nu})}^{2} \,dt \\
 & \quad \le C  \nu^{1+\b_{\max\{k,m\}}\left(2\alpha-\frac{5}{3}\right)}.\ea\ee
Thanks to  \eqref{h1}  and \eqref{r1a},  we know that
 \bnn 1+\frac{1}{3}\b_{k-1}+2\b_{k}(\alpha-1)>0, \quad \text{and} \quad 1+\b_{\max\{k,m\}}\left(2\alpha-\frac{5}{3}\right)>1-\b_{\max\{k,m\}}\ge0. \enn   
 Hence,   from \eqref{4.5}-\eqref{4.5s} we conclude
 \be\la{4.5d}\ba &\lim_{\nu\rightarrow0}\left|\sum_{|k-m|\le 1} \nu\int_{0}^{T}\int_{\om}\th \xi_{k} \xi_{m} \na\overline{u^{\nu}_{k}} \na \overline {u_{m}^{\nu}} \,dxdt \right|\\
 &\le \lim_{\nu\rightarrow0}\left(\sum_{|k-m|=0}+\sum_{|k-m|= 1}\right) \left|\nu\int_{0}^{T}\int_{\om}\th \xi_{k} \xi_{m} \na\overline{u^{\nu}_{k}} \na \overline {u_{m}^{\nu}} \,dxdt \right|\\
 &\le C \lim_{\nu\rightarrow0} \sum_{|k-m|\le 1}\left(\nu^{1+\frac{1}{3}\b_{k-1}+2\b_{k}(\alpha-1)}+  \nu^{1+\b_{\max\{k,m\}}\left(2\alpha-\frac{5}{3}\right)}\right) =0.\ea
\ee

Second, observe from \eqref{3.2} that $\th =0$ if $x\notin \om^{2\nu}\cap \Ga_{4\nu}$.
%\bnn\na \th =0\quad{\rm if}\quad x\notin \om^{2\nu}\cap \Ga_{4\nu}.\enn 
Then,  utilizing   \eqref{k2},  \eqref{h1}, \eqref{3.2},  and the  Hardy-type inequality,  it follows that,  for small $\nu$,
%\be\la{4.3}\ba & \sum_{|k-m|\le 1}\nu\int_{0}^{T}\int_{\om}\na \th \xi_{k}\xi_{m}\overline{u^{\nu}_{k}}\na \overline{u^{\nu}_{m}} =\nu\int_{0}^{T}\int_{\om^{2\nu}\cap \Ga_{4\nu}}\na \th \xi_{N}^{2}\overline{u^{\nu}_{N}}\na \overline{u^{\nu}_{N}}\\
%&\le \left( \nu \int_{0}^{T}\int_{\om^{2\nu}\cap \Ga_{4\nu}}|\na \overline{u^{\nu}_{N}}|^{2}  \right)^{\frac{1}{2}}
%\left(\nu\int_{0}^{T}  \|\na \th\|_{L^{6}(\om^{2\nu}\cap \Ga_{4\nu})}^{2}\|\overline{u^{\nu}_{N}}-u^{\nu}\|_{L^{3}(V_{N})}^{2}+\|u^{\nu}\na \th\|_{L^{2}(\Ga_{4\nu})}^{2} \right)^{\frac{1}{2}}\\
%&\le  \left( \nu \int_{0}^{T}\int_{ \om}|\na \overline{u^{\nu}_{N}}|^{2}  \right)^{\frac{1}{2}}
%\left(\nu\int_{0}^{T}\nu^{2\alpha-\frac{5}{3}}\|u^{\nu}\|_{B_{3}^{\alpha,\infty}(\om^{\nu})}^{2}+\|\na u^{\nu}\|_{L^{2}(\Ga_{4\nu})}^{2}\right)^{\frac{1}{2}}\\
%&\le C\left( \nu^{1+(2\alpha-\frac{5}{3})}+ \nu \int_{0}^{T}\int_{ \Ga_{4\nu}}|\na \overline{u^{\nu}_{N}}|^{2} \right)^{\frac{1}{2}}.
%\ea\ee 
\begin{align}
& \sum_{|k-m|\le 1}\nu\int_{0}^{T}\int_{\om}\na \th \xi_{k}\xi_{m}\overline{u^{\nu}_{k}}\na \overline{u^{\nu}_{m}} \,dxdt =\nu\int_{0}^{T}\int_{\om^{2\nu}\cap \Ga_{4\nu}}\na \th \xi_{N}^{2}\overline{u^{\nu}_{N}}\na \overline{u^{\nu}_{N}} \,dxdt \nonumber\\
&\le \left( \nu \int_{0}^{T}\int_{\om^{2\nu}\cap \Ga_{4\nu}}|\na \overline{u^{\nu}_{N}}|^{2} \,dxdt  \right)^{\frac{1}{2}}
\left(\nu\int_{0}^{T}  \|\na \th\|_{L^{6}(\om^{2\nu}\cap \Ga_{4\nu})}^{2}\|\overline{u^{\nu}_{N}}-u^{\nu}\|_{L^{3}(V_{N})}^{2}+\|u^{\nu}\na \th\|_{L^{2}(\Ga_{4\nu})}^{2} \right)^{\frac{1}{2}} \nonumber\\
&\le  \left( \nu \int_{0}^{T}\int_{ \om}|\na \overline{u^{\nu}_{N}}|^{2} \,dxdt  \right)^{\frac{1}{2}}
\left(\nu\int_{0}^{T}\nu^{2\alpha-\frac{5}{3}}\|u^{\nu}\|_{B_{3}^{\alpha,\infty}(\om^{\nu})}^{2}+\|\na u^{\nu}\|_{L^{2}(\Ga_{4\nu})}^{2}\right)^{\frac{1}{2}}\nonumber\\
&\le C\left( \nu^{1+(2\alpha-\frac{5}{3})}+ \nu \int_{0}^{T}\int_{ \Ga_{4\nu}}|\na \overline{u^{\nu}_{N}}|^{2} \,dt \right)^{\frac{1}{2}}. \la{4.3}
\end{align}
Thanks to 
   \eqref{h1} and   \eqref{0j1},   we  take  $\nu\rightarrow0$ in  \eqref{4.3} to get  
\be\la{4.4}\ba &\lim_{\nu\rightarrow0}\left|\sum_{|k-m|\le 1}\nu\int_{0}^{T}\int_{\om}\na \th \xi_{k}\xi_{m}\overline{u^{\nu}_{k}}\na \overline{u^{\nu}_{m}} \,dxdt \right| =0.
\ea\ee

Finally,  thanks to  Proposition \ref{pp1}  and \eqref{f eq}, we infer that
%\be\la{4.10e}\ba & \sum_{|k-m|\le 1} \nu \int_{0}^{T}\int_{\om} \th \na (\xi_{k}\xi_{m})\overline{u^{\nu}_{k}}\na \overline{u^{\nu}_{m}}\\
%&=\sum_{k=1}^{N} \nu \int_{0}^{T}\left(\int_{V_{k}\cap V_{k-1}}+\int_{V_{k}\cap V_{k+1}}\right) \th \na (\xi_{k}^{2})\overline{u^{\nu}_{k}}\na \overline{u^{\nu}_{k}}\\
%&\quad+\sum_{k=1}^{N-1} \nu \int_{0}^{T}\int_{V_{k}\cap V_{k+1}} \th \na (\xi_{k}\xi_{k+1})\left(\overline{u^{\nu}_{k+1}}\na \overline{u^{\nu}_{k}}+\overline{u^{\nu}_{k}}\na \overline{u^{\nu}_{k+1}}\right)\\
%&=\sum_{k=1}^{N} \nu \int_{0}^{T}\left(\int_{V_{k}\cap V_{k-1}}\th \na (\xi_{k}^{2})\overline{u^{\nu}_{k}}\na \overline{u^{\nu}_{k}}+\int_{V_{k}\cap V_{k+1}}\th \na (\xi_{k}^{2})\overline{u^{\nu}_{k+1}}\na \overline{u^{\nu}_{k+1}}\right)  \\
%&\quad+\sum_{k=1}^{N-1} \nu \int_{0}^{T}\int_{V_{k}\cap V_{k+1}} \th \na (2\xi_{k}\xi_{k+1})\overline{u^{\nu}_{k+1}}\na \overline{u^{\nu}_{k+1}}\\
%&=  \sum_{k=1}^{N-1} \nu \int_{0}^{T}\int_{V_{k}\cap V_{k+1}} \th \na \left(\xi_{k}^{2}+\xi_{k+1}^{2}+2\xi_{k}\xi_{k+1}\right)\overline{u^{\nu}_{k+1}}\na \overline{u^{\nu}_{k+1}}=0,\ea\ee
\begin{align}
& \sum_{|k-m|\le 1} \nu \int_{0}^{T}\int_{\om} \th \na (\xi_{k}\xi_{m})\overline{u^{\nu}_{k}}\na \overline{u^{\nu}_{m}} \,dxdt \nonumber\\
&=\sum_{k=1}^{N} \nu \int_{0}^{T}\left(\int_{V_{k}\cap V_{k-1}}+\int_{V_{k}\cap V_{k+1}}\right) \th \na (\xi_{k}^{2})\overline{u^{\nu}_{k}}\na \overline{u^{\nu}_{k}} \,dxdt \nonumber\\
&\quad+\sum_{k=1}^{N-1} \nu \int_{0}^{T}\int_{V_{k}\cap V_{k+1}} \th \na (\xi_{k}\xi_{k+1})\left(\overline{u^{\nu}_{k+1}}\na \overline{u^{\nu}_{k}}+\overline{u^{\nu}_{k}}\na \overline{u^{\nu}_{k+1}} \right) \,dxdt  \la{4.10e}\\
&=\sum_{k=1}^{N} \nu \int_{0}^{T}\left(\int_{V_{k}\cap V_{k-1}}\th \na (\xi_{k}^{2})\overline{u^{\nu}_{k}}\na \overline{u^{\nu}_{k}} \,dx + \int_{V_{k}\cap V_{k+1}}\th \na (\xi_{k}^{2})\overline{u^{\nu}_{k+1}}\na \overline{u^{\nu}_{k+1}} \,dx \right)\,dt  \nonumber\\
&\quad+\sum_{k=1}^{N-1} \nu \int_{0}^{T}\int_{V_{k}\cap V_{k+1}} \th \na (2\xi_{k}\xi_{k+1})\overline{u^{\nu}_{k+1}}\na \overline{u^{\nu}_{k+1}} \,dxdt \nonumber\\
&=  \sum_{k=1}^{N-1} \nu \int_{0}^{T}\int_{V_{k}\cap V_{k+1}} \th \na \left(\xi_{k}^{2}+\xi_{k+1}^{2}+2\xi_{k}\xi_{k+1}\right)\overline{u^{\nu}_{k+1}}\na \overline{u^{\nu}_{k+1}} \,dxdt =0, \nonumber
\end{align}
where the second equality is due to \eqref{f eq},  and the third equality comes from relabeling and \eqref{ll1}. 
%as well as 
%\begin{align*} \sum_{k=1}^{N}\int_{0}^{T}\int_{V_{k}\cap V_{k-1}}\th \na (\xi_{k}^{2})\overline{u^{\nu}_{k}}\na \overline{u^{\nu}_{k}} & = \sum_{k=2}^{N}\int_{0}^{T}\int_{V_{k}\cap V_{k-1}}\th \na (\xi_{k}^{2})\overline{u^{\nu}_{k}}\na \overline{u^{\nu}_{k}} \\
%& =\sum_{k=1}^{N-1}\int_{0}^{T}\int_{V_{k}\cap V_{k+1}}\th \na (\xi_{k+1}^{2})\overline{u^{\nu}_{k+1}}\na \overline{u^{\nu}_{k+1}}.\end{align*}

As a result of  \eqref{4.2}, \eqref{4.5d},   \eqref{4.4} and \eqref{4.10e}, we conclude  \eqref{4.1}.
\end{proof}

\begin{lemma}[Bulk energy flux]\la{lemma3.2} 
Under the same hypotheses as in  Theorem \ref{thm0}, we have
\be\la{3.5}  \lim_{\nu\rightarrow 0}\int_{0}^{T}\int_{\om}\th \left(\sum_{n=1}^{N}\xi_{n} \overline{u^{\nu}_{n}}\right)\left( \sum_{n=1}^{N}\xi_{n} \div \overline{ (u^{\nu} \otimes u^{\nu})_{n}}\right) \,dxdt =0.\ee\end{lemma}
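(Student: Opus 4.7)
The plan is to exploit the foliation structure to first collapse the double sum, then split into a self-interaction part and a commutator part, paralleling the scheme of Lemma~\ref{lemma3.1}. Because $\xi_m\xi_n\equiv 0$ for $|m-n|\ge 2$ by \eqref{p21}, and $\overline{u^\nu_m} = \overline{u^\nu_n}$ on each overlap $V_n\cap V_m$ by \eqref{f eq}, together with $\sum_m\xi_m\equiv 1$ on $\bigcup_n V_n$, the expression on the left of \eqref{3.5} reduces to
\[
\mathcal{I} := \sum_{n=1}^{N}\int_{0}^{T}\!\int_\om \th\,\xi_n \overline{u^\nu_n}\cdot\div\overline{(u^\nu\otimes u^\nu)_n}\,dxdt.
\]
Introducing the commutator $R_n:=\overline{(u^\nu\otimes u^\nu)_n} - \overline{u^\nu_n}\otimes\overline{u^\nu_n}$, which by Lemma~\ref{lem2.1} satisfies $\|R_n\|_{L^{3/2}}\lesssim \nu^{2\alpha\b_\ast}\|u^\nu\|_{B^{\alpha,\infty}_3}^2$ with $\b_\ast$ equal to $\b_n$ or $\b_{n+1}$ according to \eqref{k1}, I write $\mathcal{I}=J_1+J_2$ corresponding to $\div(\overline{u^\nu_n}\otimes\overline{u^\nu_n})$ and $\div R_n$ respectively.

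For the self-interaction $J_1$, since $\div\overline{u^\nu_n}=0$ inside $V_n$, the integrand becomes $\tfrac{1}{2}\div(|\overline{u^\nu_n}|^2\overline{u^\nu_n})$. Integrating by parts (with $\xi_n$ vanishing on $\p V_n$),
\[
J_1 = -\tfrac{1}{2}\sum_{n}\int_{0}^{T}\!\int_\om\bigl[\xi_n\nabla\th + \th\nabla\xi_n\bigr]\cdot\overline{u^\nu_n}\,|\overline{u^\nu_n}|^2\,dxdt.
\]
The $\th\nabla\xi_n$ sum vanishes by the interface pairing used in deriving \eqref{4.10e}: on each $V_k\cap V_{k+1}$ the $n=k$ and $n=k+1$ summands combine, and $\overline{u^\nu_k}=\overline{u^\nu_{k+1}}$ together with $\nabla(\xi_k+\xi_{k+1})=0$ force cancellation. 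The $\xi_n\nabla\th$ piece localizes to $n=N$ since $\mathrm{supp}\,\nabla\th\subset\om^{2\nu}\cap\Ga_{4\nu}\subset V_N\setminus V_{N-1}$. I then decompose $\overline{u^\nu_N}=u^\nu+w^\nu$ with $\|w^\nu\|_{L^3(V_N)}\lesssim\nu^\alpha$ from \eqref{k2}: the pure $u^\nu|u^\nu|^2$ contribution is handled exactly as in \eqref{4.3} through the Hardy-Poincar\'e bound $\|u^\nu\nabla\th\|_{L^2}\lesssim\|\nabla u^\nu\|_{L^2(\Ga_{4\nu})}$ (available since $u^\nu|_{\p\om}=0$), combined with \eqref{02h2} and Kato's hypothesis \eqref{0j1}; each of the remaining pieces carries a factor of $w^\nu$ that absorbs the loss $\|\nabla\th\|_{L^6}\sim\nu^{-5/6}$ via H\"older and produces $O(\nu^{\alpha-1/3})\to 0$.

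For the commutator $J_2$, integration by parts yields three families of terms in which the derivative falls on $\th$, on $\xi_n$, or on $\overline{u^\nu_n}$. The $\nabla\xi_n$ family vanishes by the same interface cancellation, now using $R_n=R_m$ on $V_n\cap V_m$ (identical mollifier widths by construction). The $\nabla\th$ family again reduces to the $n=N$ term and is bounded by H\"older with exponents $(6,6,3/2)$:
\[
\|\nabla\th\|_{L^6}\|\overline{u^\nu_N}\|_{L^6}\|R_N\|_{L^{3/2}}\lesssim \nu^{-5/6}\cdot\nu^{1/6}\|u^\nu\|_{L^\infty(\Ga_{4\nu})}\cdot\nu^{2\alpha}=O(\nu^{2\alpha-2/3}).
\]
The main commutator term $\th\xi_n\nabla\overline{u^\nu_n}:R_n$ is controlled on each subregion of $V_n$ by $\|\nabla\overline{u^\nu_n}\|_{L^3}\|R_n\|_{L^{3/2}}\lesssim \nu^{\b_\ast(3\alpha-1)}$ through \eqref{j} and Lemma~\ref{lem2.1}, which vanishes whenever $\alpha>\tfrac13$. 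The time integrations are absorbed by the uniform bounds \eqref{h1} and \eqref{02h2} in all cases.

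The main obstacle is the boundary-layer contribution to $J_1$: the naive estimate on $\int\nabla\th\cdot\overline{u^\nu_N}|\overline{u^\nu_N}|^2$ only gives boundedness, because $|\nabla\th|\sim\nu^{-1}$ on a set of volume $\sim\nu$ and the triple-product $L^\infty$ control from \eqref{02h2} is not by itself small. The resolution is to peel off one factor of $u^\nu$ and trade $\nabla\th$ for $\nabla u^\nu$ via the Hardy-Poincar\'e inequality on the thin strip $\Ga_{4\nu}$, at which point hypothesis \eqref{0j1} produces the required decay; the mollification error $w^\nu$ simultaneously supplies the H\"older gain $\nu^\alpha$ that ensures the remaining mixed terms vanish precisely above the Onsager threshold $\alpha>\tfrac13$.
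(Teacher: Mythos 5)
Your argument is correct and follows the paper's proof in all essentials: the same split into a commutator part and a transport part (cf. \eqref{3.6}), the same interface cancellation of the $\nabla\xi$ contributions via Proposition \ref{pp1} and \eqref{f eq}, the same interior commutator bound of order $\nu^{\beta(3\alpha-1)}$ from Lemma \ref{lem2.1} and \eqref{j}, and the same Hardy-plus-\eqref{02h2}-plus-\eqref{0j1} treatment of the $\nabla\theta$ transport term. The only genuine deviations are minor: your H\"older $(6,6,3/2)$ bound $O(\nu^{2\alpha-2/3})$ for the $\nabla\theta$ commutator piece exploits the $\nu^{2\alpha}$ smallness of $R_N$ directly, whereas the paper's estimate \eqref{03.8j} bounds that commutator crudely by $|\overline{u^{\nu}_{N}}|^{2}$ and falls back on \eqref{0j1}; and you explicitly split $\overline{u^{\nu}_{N}}=u^{\nu}+w^{\nu}$ before invoking the Hardy inequality (which requires vanishing on $\partial\om$), a point the paper leaves implicit — both routes close the argument.
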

\begin{proof} 
 By \eqref{p21},  integration by parts leads to 
 \be\la{3.6} 
 \begin{split}
 & \int_{0}^{T}\int_{\om}\th \left(\sum_{n=1}^{N}\xi_{n} \overline{u^{\nu}_{n}}\right)\left( \sum_{n=1}^{N }\xi_{n} \div \overline{ (u^{\nu} \otimes u^{\nu})_{n}}\right) \,dxdt \\
 & =\sum_{|k-m|\le 1}\int_{0}^{T}\int_{\om} \th \xi_{k}\xi_{m}\overline{u^{\nu}_{k}}  \div \overline{ (u^{\nu} \otimes u^{\nu})_{m}} \,dxdt \\
 &=\sum_{|k-m|\le 1} \int_{0}^{T}\int_{\om} \left(\overline{u^{\nu}_{m}}\otimes \overline{u^{\nu}_{m}}- \overline{ (u^{\nu} \otimes u^{\nu})_{m}}\right):\na (\th \xi_{k}\xi_{m}\overline{u^{\nu}_{k}}) \,dxdt \\
 & \quad - \sum_{|k-m|\le 1}\int_{0}^{T}\int_{\om} \overline{u^{\nu}_{m}}\otimes \overline{u^{\nu}_{m}}: \na (\th \xi_{k}\xi_{m}\overline{u^{\nu}_{k}}) \,dxdt.
 \end{split}
 \ee

We  claim
\be\la{9g}
\lim_{\nu\rightarrow 0} \sum_{|k-m|\le 1} \int_{0}^{T}\int_{\om} \left(\overline{u^{\nu}_{m}}\otimes \overline{u^{\nu}_{m}}- \overline{ (u^{\nu} \otimes u^{\nu})_{m}}\right):\na (\th \xi_{k}\xi_{m}\overline{u^{\nu}_{k}}) \,dxdt =0. \ee
In fact,  we notice from   \eqref{h1}, \eqref{02h2},  \eqref{3.2},  \eqref{0},    \eqref{k2}, and  the  Hardy-type inequality that
%Question: why use $u_N$? Is summation missing? -- due to the def'n of \theta
% \be\la{03.8j}\ba    
%&\left| \sum_{|k-m|\le 1}\int_{0}^{T}\int_{\om}\left(\overline{u^{\nu}_{m}}\otimes \overline{u^{\nu}_{m}}- \overline{ (u^{\nu} \otimes u^{\nu})_{m}}\right):\na \th \xi_{k}\xi_{m}\overline{u^{\nu}_{k}} \,dxdt \right|\\
% &\le  C\left(\int_{0}^{T} \int_{\Ga_{2\nu} \cap \om^{\nu}} \left|\overline{u^{\nu}_{N}} \right|^{4} \,dxdt\right)^{\frac{1}{2}}\left(\int_{0}^{T} \int_{\Ga_{4\nu} \cap \om^{2\nu}}|\na \th (\overline{u^{\nu}_{N}}-u^{\nu}+u^{\nu})|^{2} \, dxdt \right)^{\frac{1}{2}}\\
% &\le  C \left(\nu\int_{0}^{T} \|u^{\nu}\|_{L^{\infty}(\Ga_{8\nu})}^{4} \,dt \right)^{\frac{1}{2}}\left(\int_{0}^{T} \nu^{-\frac{5}{3}} \| \overline{u^{\nu}_{N}}-u^{\nu}\|_{L^{3}}^{2} \,dt + \int_{0}^{T} \int_{\Ga_{4\nu} }|\na \th u^{\nu}|^{2} \,dxdt \right)^{\frac{1}{2}}\\
%  &\le  C \nu^{\frac{1}{2}}\left(\int_{0}^{T} \nu^{(2\alpha-\frac{5}{3})} \| u^{\nu} \|_{B_{3}^{\alpha,\infty}(\om^{\nu})}^{2} \,dt +\int_{0}^{T} \int_{\Ga_{4\nu} }|\na u^{\nu}|^{2} \,dxdt \right)^{\frac{1}{2}}\\
% &\le  C\left( \nu^{1+(2\alpha-\frac{5}{3})} +\nu\int_{0}^{T} \int_{\Ga_{4\nu} }|\na u^{\nu}|^{2} \,dxdt \right)^{\frac{1}{2}}.
%\ea\ee
%
\begin{align}
&\left| \sum_{|k-m|\le 1}\int_{0}^{T}\int_{\om}\left(\overline{u^{\nu}_{m}}\otimes \overline{u^{\nu}_{m}}- \overline{ (u^{\nu} \otimes u^{\nu})_{m}}\right):(\na \th) \xi_{k}\xi_{m}\overline{u^{\nu}_{k}} \,dxdt \right| \nonumber\\
 &\le  C\left(\int_{0}^{T} \int_{\Ga_{2\nu} \cap \om^{\nu}} \left|\overline{u^{\nu}_{N}} \right|^{4} \,dxdt\right)^{\frac{1}{2}}\left(\int_{0}^{T} \int_{\Ga_{4\nu} \cap \om^{2\nu}}|\na \th (\overline{u^{\nu}_{N}}-u^{\nu}+u^{\nu})|^{2} \, dxdt \right)^{\frac{1}{2}} \nonumber \\
 &\le  C \left(\nu\int_{0}^{T} \|u^{\nu}\|_{L^{\infty}(\Ga_{4\nu})}^{4} \,dt \right)^{\frac{1}{2}}\left(\int_{0}^{T} \nu^{-\frac{5}{3}} \| \overline{u^{\nu}_{N}}-u^{\nu}\|_{L^{3}}^{2} \,dt + \int_{0}^{T} \int_{\Ga_{4\nu} }|\na \th u^{\nu}|^{2} \,dxdt \right)^{\frac{1}{2}} \nonumber \\
  &\le  C \nu^{\frac{1}{2}}\left(\int_{0}^{T} \nu^{(2\alpha-\frac{5}{3})} \| u^{\nu} \|_{B_{3}^{\alpha,\infty}(\om^{\nu})}^{2} \,dt +\int_{0}^{T} \int_{\Ga_{4\nu} }|\na u^{\nu}|^{2} \,dxdt \right)^{\frac{1}{2}} \la{03.8j}\\
 &\le  C\left( \nu^{1+(2\alpha-\frac{5}{3})} +\nu\int_{0}^{T} \int_{\Ga_{4\nu} }|\na u^{\nu}|^{2} \,dxdt \right)^{\frac{1}{2}}. \nonumber
\end{align}
This, together with  \eqref{h1} and \eqref{0j1}, implies that    
\be\la{ui3}\ba    
\lim_{\nu\rightarrow0}  \sum_{|k-m|\le 1}\int_{0}^{T}\int_{\om}\left(\overline{u^{\nu}_{m}}\otimes \overline{u^{\nu}_{m}}- \overline{ (u^{\nu} \otimes u^{\nu})_{m}}\right): (\na \th) \xi_{k}\xi_{m}\overline{u^{\nu}_{k}} \,dxdt =0.\ea\ee
Next,  from   \eqref{u2},  \eqref{h1}, \eqref{j}, and  Lemma \ref{lem2.1}  we have
\bnn\ba   
& \sum_{k}\int_{0}^{T}\int_{\om}\left(\overline{u^{\nu}_{k}}\otimes \overline{u^{\nu}_{k}}- \overline{ (u^{\nu} \otimes u^{\nu})_{k}}\right):  \th  \xi_{k}^{2}\na \overline{u^{\nu}_{k}} \,dxdt \\
&\le C \sum_k \int_{0}^{T} \|\overline{u^{\nu}_{k}}\otimes \overline{u^{\nu}_{k}}- \overline{ (u^{\nu} \otimes u^{\nu})_{k}}\|_{L^{\frac{3}{2}}(V_{k}\cap V_{k+1})}  \|\na \overline{u^{\nu}_{k}}\|_{L^{3}(V_{k}\cap V_{k+1})} \,dt \\
&\quad+ C \sum_k \int_{0}^{T}\|\overline{u^{\nu}_{k}}\otimes \overline{u^{\nu}_{k}}- \overline{ (u^{\nu} \otimes u^{\nu})_{k}}\|_{L^{\frac{3}{2}}(V_{k}\cap V_{k+1}^{c})}  \|\na \overline{u^{\nu}_{k}}\|_{L^{3}(V_{k}\cap V_{k+1}^{c})} \,dt \\
&\le C \sum_k \int_{0}^{T}\left(\nu^{2\beta_{k+1} \alpha +\beta_{k+1}(\alpha-1)}+\nu^{2\beta_{k} \alpha +\beta_{k}(\alpha-1)} \right)\| u^{\nu} \|_{B_{3}^{\alpha,\infty}(\om^{\nu})}^{3} \,dt \\
&\le  C \sum_k \nu^{\b_{k}(3\alpha-1)} .\ea\enn
Similarly, 
\bnn \left| \sum_{|k-m|=1}\int_{0}^{T}\int_{\om}\left(\overline{u^{\nu}_{m}}\otimes \overline{u^{\nu}_{m}}- \overline{ (u^{\nu} \otimes u^{\nu})_{m}}\right):  \th  \xi_{k}\xi_{m}\na \overline{u^{\nu}_{k}} \,dxdt \right|\le  C \sum_{|k-m|=1} \nu^{\b_{\max\{m,k\}}(3\alpha-1)}.\enn
 The above two inequalities  and \eqref{h1}   guarantee that   
\be\la{b3sde}  
\lim_{\nu\rightarrow0} \left|\sum_{|k-m|\le 1}\int_{0}^{T}\int_{\om}\left(\overline{u^{\nu}_{m}}\otimes \overline{u^{\nu}_{m}}- \overline{ (u^{\nu} \otimes u^{\nu})_{m}}\right):  \th  \xi_{k}\xi_{m}\na \overline{u^{\nu}_{k}} \,dxdt \right|=0.\ee
By Proposition \ref{pp1},  the same deduction  as  \eqref{4.10e} yields that
\begin{align}
%\be\la{ui2}\ba 
&\sum_{|k-m|\le 1}\int_{0}^{T}\int_{\om}\left(\overline{u^{\nu}_{m}}\otimes \overline{u^{\nu}_{m}}- \overline{ (u^{\nu} \otimes u^{\nu})_{m}}\right):  \th \na (\xi_{k}\xi_{m})\overline{u^{\nu}_{k}} \,dxdt \nonumber \\
&= \sum_{k=1}^{N-1}\int_{0}^{T}\int_{V_{k}\cap V_{k+1}}\left(\overline{u^{\nu}_{k+1}}\otimes \overline{u^{\nu}_{k+1}}- \overline{ (u^{\nu} \otimes u^{\nu})_{k+1}}\right):    \th \na \left(\xi_{k}^{2}+\xi_{k+1}^{2}+2\xi_{k}\xi_{k+1}\right) \,dxdt \nonumber \\
&=0. \la{ui2} 
%\ea\ee
\end{align}
   As a result of   \eqref{ui3}--\eqref{ui2},  we conclude \eqref{9g}.

It remains to  control  the last integral   appeared  in \eqref{3.6}.  By the fact  $\div \overline{u^{\nu}_{n}}=0$, we deduce that,  if   $|k-m|=0$,
\begin{align}
%\be\la{03.8jkq}\ba   
& \sum_{|k-m|=0}\int_{0}^{T}\int_{\om} \overline{u^{\nu}_{m}}\otimes \overline{u^{\nu}_{m}}: \na (\th \xi_{k}\xi_{m}\overline{u^{\nu}_{k}}) \,dxdt = \frac{1}{2}\sum_{k=1}^N\int_{0}^{T}\int_{\om}| \overline{u^{\nu}_{k}}|^{2} \overline{u^{\nu}_{k}}\cdot \na (\th \xi^{2}_{k}) \,dxdt \nonumber \\ 
& = {1\over2} \sum_{k=1}^N \int_0^T \int_{V_k} |\overline{u^{\nu}_{k}}|^{2} \xi_k^2 \overline{u^{\nu}_{k}}\cdot \na \th \,dxdt + {1\over2} \sum_{k=1}^N \int_0^T \int_{V_k}  \th |\overline{u^{\nu}_{k}}|^{2} \overline{u^{\nu}_{k}}\cdot \na (\xi^{2}_{k}) \,dxdt \nonumber  \\
& = {1\over2} \sum_{k=1}^N \int_0^T \int_{V_k} |\overline{u^{\nu}_{k}}|^{2} \xi_k^2 \overline{u^{\nu}_{k}}\cdot \na \th \,dxdt + {1\over2} \sum_{k=1}^N \int_0^T \int_{V_{k-1} \cap V_k}  \th |\overline{u^{\nu}_{k}}|^{2} \overline{u^{\nu}_{k}}\cdot \na (\xi^{2}_{k}) \,dxdt \nonumber \\
& \quad \ + {1\over2} \sum_{k=1}^N \int_0^T \int_{V_{k} \cap V_{k+1}}  \th |\overline{u^{\nu}_{k+1}}|^{2} \overline{u^{\nu}_{k+1}}\cdot \na (\xi^{2}_{k}) \,dxdt \label{03.8jkq} \\
& = {1\over2} \sum_{k=1}^N \int_0^T \int_{V_k} |\overline{u^{\nu}_{k}}|^{2} \xi_k^2 \overline{u^{\nu}_{k}}\cdot \na \th \,dxdt + {1\over2} \sum_{k=1}^{N-1} \int_0^T \int_{V_{k} \cap V_{k+1}}  \th |\overline{u^{\nu}_{k}}|^{2} \overline{u^{\nu}_{k}}\cdot \na (\xi^{2}_{k} + \xi^2_{k+1}) \,dxdt \nonumber \\
& \quad \ + {1\over2} \int_0^T \left( \int_{V_0 \cap V_1} \th |\overline{u^{\nu}_{1}}|^{2} \overline{u^{\nu}_{1}}\cdot \na (\xi^{2}_{1}) \,dx  + \int_{V_N \cap V_{N+1}} \th |\overline{u^{\nu}_{N}}|^{2} \overline{u^{\nu}_{N}}\cdot \na (\xi^{2}_{N}) \,dx  \right) \,dt \nonumber \\
& = {1\over2} \sum_{k=1}^N \int_0^T \int_{V_k} |\overline{u^{\nu}_{k}}|^{2} \xi_k^2 \overline{u^{\nu}_{k}}\cdot \na \th \,dxdt - \sum_{k=1}^{N-1} \int_0^T \int_{V_{k} \cap V_{k+1}}  \th |\overline{u^{\nu}_{k}}|^{2} \overline{u^{\nu}_{k}}\cdot \na (\xi_{k} \xi_{k+1}) \,dxdt,  \nonumber 
%\ea\ee
\end{align}
where the third equality is due to \eqref{f eq} and \eqref{ll1};
and if $|k-m|=1$,
\begin{align}
& \sum_{|k-m|=1}\int_{0}^{T}\int_{\om} \overline{u^{\nu}_{m}}\otimes \overline{u^{\nu}_{m}}: \na( \th \xi_{k}\xi_{m}\overline{u^{\nu}_{k}}) \,dxdt \nonumber \\
= & \sum_{k=1}^{N-1} \int^T_0 \int_{V_k \cap V_{k+1}} \overline{u^{\nu}_{k+1}}\otimes \overline{u^{\nu}_{k+1}}: \na( 2 \th \xi_{k}\xi_{k+1}\overline{u^{\nu}_{k+1}}) \,dxdt \nonumber \\
= & \sum_{k=1}^{N-1} \int^T_0 \int_{V_k \cap V_{k+1}} |\overline{u^{\nu}_{k+1}}|^2  \overline{u^{\nu}_{k+1}} \cdot \na (\th \xi_{k}\xi_{k+1}) \,dxdt \la{9a} \\
= & \sum_{k=1}^{N-1} \int^T_0 \int_{V_k \cap V_{k+1}} \Big( |\overline{u^{\nu}_{k+1}}|^2  \overline{u^{\nu}_{k+1}} \cdot \na \th (\xi_{k}\xi_{k+1}) + \th |\overline{u^{\nu}_{k+1}}|^2  \overline{u^{\nu}_{k+1}} \cdot \na (\xi_{k}\xi_{k+1}) \Big) \,dxdt \nonumber \\
= & \sum_{k=1}^{N-1} \int^T_0 \int_{V_k \cap V_{k+1}} \Big( |\overline{u^{\nu}_{k}}|^2  \overline{u^{\nu}_{k}} \cdot \na \th (\xi_{k}\xi_{k+1}) + \th |\overline{u^{\nu}_{k}}|^2  \overline{u^{\nu}_{k}} \cdot \na (\xi_{k}\xi_{k+1}) \Big) \,dxdt. \nonumber
\end{align}

Putting the above calculations  together and applying Lemma \ref{lemma2.3} we have 
\begin{align}
& \sum_{|k-m|\le 1}\int_{0}^{T}\int_{\om} \overline{u^{\nu}_{m}}\otimes \overline{u^{\nu}_{m}}: \na (\th \xi_{k}\xi_{m}\overline{u^{\nu}_{k}}) \,dxdt \nonumber\\
= &\ {1\over2} \sum_{k=1}^N \int_0^T \int_{V_k} |\overline{u^{\nu}_{k}}|^{2} \xi_k^2 \overline{u^{\nu}_{k}}\cdot \na \th \,dxdt + \sum_{k=1}^{N-1} \int^T_0 \int_{V_k \cap V_{k+1}} |\overline{u^{\nu}_{k}}|^2  \overline{u^{\nu}_{k}} \cdot \na \th (\xi_{k}\xi_{k+1}) \,dxdt \nonumber \\
\le &\ C \int^T_0 \int_{\Ga_{2\nu} \cap \om^{\nu}} |\overline{u^{\nu}_{k}}|^{2} | \overline{u^{\nu}_{k}} \cdot \na \th | \,dxdt \label{3.2 3} \\
\le &\ C \left( \nu \int^T_0 \| u^{\nu} \|_{L^\infty({\Ga_{4\nu}})}\,dt \right)^{1/2} \left( \int^T_0 \int_{\Ga_{4\nu}} | \na u^\nu|^2 \,dxdt \right)^{1/2}.        \nonumber
\end{align}
From \eqref{02h2} and \eqref{0j1} we conclude that
\begin{equation}\la{3.17r}
\lim_{\nu \to 0} \sum_{|k-m|\le 1}\int_{0}^{T}\int_{\om} \overline{u^{\nu}_{m}}\otimes \overline{u^{\nu}_{m}}: \na (\th \xi_{k}\xi_{m}\overline{u^{\nu}_{k}}) \,dxdt = 0.
\end{equation}

Taking \eqref{3.6}--\eqref{9g}, and  \eqref{3.17r} into account,     we  complete the proof  of Lemma \ref{lemma3.2}. 
\end{proof}

\begin{lemma}\la{lemma3.3} 
Under the same hypotheses as in  Theorem \ref{thm0}, we have
\be\la{5.1} \lim_{\nu\rightarrow 0} \int_{0}^{T}\int_{\om}\th \left(\sum_{n=1}^{N}\xi_{n} \overline{u^{\nu}_{n}}\right)\left( \sum_{n=1}^{N} \xi_{n}\na \overline{P^{\nu}_{n}} \right) dxdt=0.\ee
\end{lemma}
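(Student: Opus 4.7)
The plan is to proceed as in the proofs of Lemmas \ref{lemma3.1} and \ref{lemma3.2}. First, by \eqref{p21}, the double sum reduces to pairs with $|k-m|\le 1$. Since on each sub-region $\overline{u^\nu_k}$ is a standard mollification of $u^\nu$ at a fixed scale and hence divergence-free, integration by parts (the $\p\om$ boundary term vanishing because $\th|_{\p\om}=0$) yields
\begin{equation*}
\int_0^T \int_\om \th \xi_k \xi_m \overline{u^\nu_k} \cdot \na \overline{P^\nu_m} \, dxdt = -\int_0^T \int_\om \xi_k \xi_m \overline{P^\nu_m} \overline{u^\nu_k} \cdot \na\th \, dxdt - \int_0^T \int_\om \th \overline{P^\nu_m} \overline{u^\nu_k} \cdot \na(\xi_k \xi_m) \, dxdt.
\end{equation*}
It then suffices to show that each of the two resulting summed terms vanishes in the limit $\nu \to 0$.

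The second (interior) term is identically zero, by the same telescoping cancellation used in \eqref{4.10e}/\eqref{ui2}. Indeed, on each overlap $V_n \cap V_{n+1}$, identity \eqref{f eq} allows me to replace $\overline{u^\nu_k}$ and $\overline{P^\nu_m}$ (for $k,m \in \{n, n+1\}$) by their common mollification at scale $\nu^{\b_{n+1}}$, after which $\sum_{k,m \in \{n,n+1\}} \na(\xi_k \xi_m) = \na(\xi_n + \xi_{n+1})^2 = 0$ by \eqref{ll1}; on the interior $V_n \setminus \cup_{i\ne n} V_i$ only the diagonal $k=m=n$ survives and $\na(\xi_n^2) = 0$ since $\xi_n \equiv 1$ there. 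The analogous cancellation holds on each overlap $V_n \cap V_{n-1}$.

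For the remaining boundary-layer term $I_a$, the support of $\na\th$ lies in $\Ga_{4\nu}\cap\om^{2\nu}$, which is contained in the interior part of $V_N$ with mollification scale $\nu^{\b_N}=\nu$. Applying Cauchy--Schwarz,
\begin{equation*}
|I_a| \le \left( \int_0^T \int_{\Ga_{4\nu} \cap \om^{2\nu}} |\overline{P^\nu_N}|^2 \, dxdt \right)^{1/2} \left( \int_0^T \int |\na\th|^2 |\overline{u^\nu_N}|^2 \, dxdt \right)^{1/2}.
\end{equation*}
The pressure factor is controlled pointwise by $|\overline{P^\nu_N}(x)| \le \|P^\nu\|_{L^\infty(\Ga_{5\nu})}$ together with $|\Ga_{4\nu}\cap\om^{2\nu}| \le C\nu$, yielding $C\sqrt\nu\,\|P^\nu\|_{L^2_t L^\infty_x(\Ga_{4\nu})}$, which is finite by \eqref{02h2}. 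For the velocity factor I split $\overline{u^\nu_N} = u^\nu + (\overline{u^\nu_N} - u^\nu)$: since $|\na\th| \le 4\nu^{-1} \le 16/\dist(\cdot,\p\om)$ on $\Ga_{4\nu}$, Hardy's inequality (Lemma \ref{lemma2.3}) bounds the $u^\nu$-piece by $C (\int_0^T \int_{\Ga_{4\nu}} |\na u^\nu|^2 dxdt)^{1/2} = o(\nu^{-1/2})$ through \eqref{0j1}; and the mollification error, via \eqref{k2} with $|\Ga_{4\nu}\cap\om^{2\nu}| \le C\nu$ and an $L^3$--$L^2$ interpolation, contributes $O(\nu^{\alpha - 5/6})$. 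Multiplication yields $|I_a| \le o(1) + O(\nu^{\alpha - 1/3}) \to 0$.

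The main obstacle is the boundary-layer estimate, where the $\nu^{-1}$ singularity of $\na\th$ is tight against the $\sqrt\nu$-thickness of its support. One must simultaneously exploit the $L^\infty(\Ga_{4\nu})$ control on $P^\nu$ from \eqref{02h2}, Hardy's inequality to transfer the $\nu^{-1}$ weight onto $\na u^\nu$, and the vanishing-dissipation hypothesis \eqref{0j1}; the Onsager-type regularity $\alpha > 1/3$ from \eqref{h1} is precisely what is needed to absorb the mollification error as $\nu^{\alpha - 1/3} \to 0$.
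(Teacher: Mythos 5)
Your proof is correct and is precisely the ``slight modification of Lemma \ref{lemma3.2}'' that the paper invokes without writing out: integration by parts using $\div\,\overline{u^{\nu}_{k}}=0$, exact cancellation of the $\na(\xi_{k}\xi_{m})$ terms through \eqref{f eq} and \eqref{ll1}, and control of the $\na\th$ term by combining the $L^{\infty}$ pressure bound \eqref{02h2}, the localized Hardy inequality, the Kato condition \eqref{0j1}, and the mollification estimate \eqref{k2}, yielding $o(1)+O(\nu^{\alpha-1/3})\to 0$. The only cosmetic discrepancy is the harmless enlargement from $\Ga_{4\nu}$ to $\Ga_{5\nu}$ when bounding the mollified pressure pointwise, an adjustment of constants that the paper itself glosses over in the analogous estimate \eqref{03.8j}.
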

\begin{proof}
The proof   is a  slight modification of  that in   Lemma \ref{lemma3.2}, and hence we omit it here.
\end{proof}

\section{Proof of Theorem \ref{thm0}}\label{sec proof}
  \subsection{Vanishing of global dissipation}
  We aim to prove the validity of  \eqref{00}.  The combination of   \eqref{0} with \eqref{3.3} generates
 \be\la{85}\ba &0\le 2\nu\int_{0}^{T}\int_{\om}|\na u^{\nu}|^{2}\\
 &\le   \left[ \int_{\om} |u_{0}^{\nu}|^{2} - \int_{\om}  \th \left(\sum_{n=1}^{N}\xi_{n} \overline{(u^{\nu}_{0})_{n}}\right)^{2} \right] +  \left[ \int_{\om}\th \left|\sum_{n=1}^{N}\xi_{n} \overline{u^{\nu}_{n}}\right|^{2}-\int_{\om} |u^{\nu}|^{2} \right]\\
&\quad+2\int_{0}^{T}\int_{\om}\th \left(\sum_{n=1}^{N}\xi_{n} \overline{u^{\nu}_{n}}\right)\left(\sum_{n=1}^{N} \xi_{n}\nu \lap \overline{u^{\nu}_{n}} -\sum_{n=1}^{N} \xi_{n} \div \overline{ (u^{\nu} \otimes u^{\nu})_{n}} -\sum_{n=1}^{N} \xi_{n}\na \overline{P^{\nu}_{n}}  \right)\\
&=: I+I\!I+I\!I\!I.
\ea\ee

First,  from Proposition \ref{p} it follows that
\be\la{l1}\lim_{\nu\rightarrow0}I\!I\!I=0.\ee

Next,  basic  properties of mollifier $\eta_{\nu^{\b}}$ ensure that \bnn\ba  \int_{\om}  \th\xi_{1}^{2} |\overline{u^{\nu}_{1}}|^{2}dx&\le \int_{V_{1}}    |\overline{u^{\nu}_{1}}|^{2} \le \int_{\om}|u^{\nu} |^{2} .\ea\enn 
Then,
\be\la{po}\ba I\!I
&=\sum_{2<k+m}\int_{\om}  \th \xi_{k}\xi_{m} \overline{u^{\nu}_{k}}\,\overline{u^{\nu}_{m}}+\int_{\om} \th\xi_{1}^{2} \overline{u^{\nu}_{1}}^{2}- \int_{\om} |u^{\nu}|^{2}\\
&\le \sum_{2<k+m}\int_{\om}  \th \xi_{k}\xi_{m} \overline{u^{\nu}_{k}}\,\overline{u^{\nu}_{m}}\\
&\le C \left(\int_{V_{k}\cap V_{m}}  |\overline{u^{\nu}_{k}}|^{2}\right)^{\frac{1}{2}}\left(\int_{V_{k}\cap V_{m}}  |\overline{u^{\nu}_{m}}|^{2}\right)^{\frac{1}{2}}\le C \int_{\Ga_{2\nu^{\b_{1}}}}  |u^{\nu}|^{2} dx,\ea\ee
and hence the uniform bound of   $u^{\nu}$ in $L^{\infty}(0,T;L^{2})$ implies that 
\be\la{l2} \ba \lim_{\nu\rightarrow0}I\!I
\le   C \lim_{\nu\rightarrow0}\int_{\Ga_{2\nu^{\b_{1}}}}  |u^{\nu}|^{2} dx=0.
\ea\ee

Finally,  since
\bnn\ba  
&\left|\int_{\om} |u_{0}|^{2}- \int_{\om} \th \xi_{1}^{2} |\overline{(u^{\nu}_{0})_{1}}|^{2}\right| =\left|\int_{\om} |u_{0}|^{2}- \int_{\om^{\nu^{\b_1}}}   \xi_{1}^{2} |\overline{(u^{\nu}_{0})_{1}}|^{2}\right|\\
&\le \int_{\om \backslash \om^{\nu^{\b_1}}} |u_{0}|^{2} +\int_{\om^{\nu^{\b_1}}}   (1-\xi_{1}^{2} )|\overline{(u^{\nu}_{0})_{1}}|^{2}+\left|\int_{\om^{\nu^{\b_1}}}\left( |u_{0}|^{2}-|\overline{(u^{\nu}_{0})_{1}}|^{2}\right)\right| \\
&\le 2\|u_{0}\|_{L^{2}(\Ga_{2\nu^{\b_1}})}  +C\|u_{0}\|_{L^{2}(\om)} \|u_{0}-\overline{(u^{\nu}_{0})_{1}}\|_{L^{2}(\om^{\nu^{\b_1}})}\\
&\le 2\|u_{0}\|_{L^{2}(\Ga_{2\nu^{\b_1}})}  +C \left( \|u_{0}-\overline{(u_{0})_{1}}\|_{L^{2}(\om^{\nu^{\b_1}})}+ \|u_{0}- u^{\nu}_{0}\|_{L^{2}(\om^{\nu^{\b_1}})}\right)\\
&\to 0 \quad\text{ as } \nu\to 0, 
\ea\enn
then  the strong convergence of $u_{0}^{\nu}$ to $u_{0}$ in $L^{2}$ guarantees that
\bnn  \lim_{\nu\rightarrow0}\left|\int_{\om} |u_{0}^{\nu}|^{2}- \int_{\om} \th \xi_{1}^{2} |\overline{(u^{\nu}_{0})_{1}}|^{2}\right|  = 0.
\enn  This allows us to deduce that
 \be\la{l3}\ba \lim_{\nu\rightarrow0}|I| &= \lim_{\nu\rightarrow0} \left|\int_{\om} |u_{0}^\nu|^{2}- \sum_{k,m=1}^{N} \int_{\om}  \th \xi_{k}\xi_{m} \overline{(u^{\nu}_{0})_{k}}\,\overline{(u^{\nu}_{0})_{m}}\right| \\
&\le \lim_{\nu\rightarrow0} \left|\int_{\om} |u_{0}^\nu|^{2}- \int_{\om} \th \xi_{1}^{2} |\overline{(u^{\nu}_{0})_{1}}|^{2}\right|+\sum_{2<k+m} \lim_{\nu\rightarrow0} \left|\int_{\om}  \th \xi_{k}\xi_{m} \overline{(u^{\nu}_{0})_{k}}\,\overline{(u^{\nu}_{0})_{m}}\right|=0,
\ea\ee
where in  the last  equality we have used
\bnn\ba \sum_{2<k+m} \lim_{\nu\rightarrow0} \left|\int_{\om}  \th \xi_{k}\xi_{m} \overline{(u^{\nu}_{0})_{k}}\,\overline{(u^{\nu}_{0})_{m}}\right| =0,\ea\enn
which comes from \eqref{po}--\eqref{l2}.
As a result of \eqref{85}--\eqref{l1} and  \eqref{l2}--\eqref{l3}, we obtain the desired  \eqref{00}.

 \subsection{Convergence to Euler solutions.}

  Under the    assumptions in Theorem \ref{thm0} and Lemma \ref{lemma2.2},    there is some $(u,P)$ such that, upon to some subsequence, \be\la{80} u^{\nu}\rightharpoonup u\,\,\, {\rm in}\,\,\,L^{3}(0,T;B_{3}^{\si,\infty}(\om^{\nu}))\cap L^{\infty}(0,T;L^{2}(\om)),\quad P^{\nu}\rightharpoonup P\,\,\, {\rm in}\,\,\,L^{\frac{3}{2}}(0,T;L^{\frac{3}{2}}(\om)).\ee
Thanks to   \eqref{80}  and  \eqref{0}, we have 
$$\p_{t} u^{\nu}=\nu\lap u^{\nu}-\na P^{\nu}-\div (u^{\nu}\otimes u^{\nu})\in L^{\frac{3}{2}}(0,T;W^{-1,\frac{3}{2}}(\om)),$$
 and moreover,  
\be\la{81} 
u^{\nu}\rightarrow u\,\,\, {\rm in}\,\,\,L^{3}(0,T;L^{3}(\om^{\nu})) \cap C\left([0,T],L^{2}_{weak}(\om)\right)
\ee 
owing to  the compactness results. 
In addition,  it follows from   \eqref{0}  that, as $\nu\rightarrow0,$
\be\la{80a} 
\left|\int_{0}^{T}\int_{\om}\nu \na u^{\nu}\cdot \na \varphi \right|\le   C\nu^{\frac{1}{2}}\left(\nu\int_{0}^{T}\|\na u^{\nu}\|_{L^{2}(\om)}^{2}\right)^{\frac{1}{2}}\rightarrow 0.
\ee
Having \eqref{80}--\eqref{80a} in hand,   we easily check that  $u$ solves Euler equations \eqref{1e}   in $\om\times (0,T).$

\section {Boundary layers for smoother solutions}\label{sec higher reg}

The boundary layer $\Ga_{4\nu}$ in \eqref{0j1} of Theorem \ref{thm0} in fact holds for all $\alpha>\frac{1}{3}$. The emphasis of the previous analysis lies in determining the boundary layer for solutions near the critical Onsager's regularity. On the other hand, when the solutions are more regular, the hypotheses in Theorem \ref{thm0}  can be relaxed, and the boundary layer can be even thinner, as is shown in the following theorem.

\begin{theorem}\la{thm2}  
Let  $\om\subset \r$ be a bounded domain with $C^2$ boundary.  Let $\{ u^\nu \}_{\nu>0}$ be a sequence of Leray--Hopf weak solutions to \eqref{1ns} with initial data $u_{0}^{\nu}$ and suppose that $u_{0}^{\nu} \to u_0$ in $L^{2}(\om)$ as $\nu \to 0$. Assume in addition that \eqref{h1} holds, and 
\be\ba\la{02h2s} 
\left\{\ba&   u^{\nu} \,\,{\rm  is\,\, uniformly \,\,in}\,\,  \nu \,\,{\rm bounded\,\, in}\, \,  L^{4}\left(0,T;L^{p}\left(\Ga_{4\nu}\right)\right),\\
&  P^{\nu}\,\,{\rm  is \,\, uniformly \,\,in}\,\,  \nu \,\,{\rm bounded\,\, in}\, \,  L^{2}\left(0,T;L^{\frac{p}{2}}\left(\Ga_{4\nu}\right)\right),\ea\right.\ea
\ee
with $p>\frac{6}{3\alpha-1}$. Let $a>1$ be such that 
\be\la{es}
a<\frac{3}{5-6\alpha}, \ \text{when }\ {1\over3} < \alpha < {5\over6}; \qquad a<\infty, \ \text{when }\ {5\over6} \le \alpha < {1}.
\ee
If
\be\la{0j1s}  
\lim_{\nu\rightarrow0}\nu \int_{0}^{T}\int_{\Ga_{4\nu^{a}}}|\na u^{\nu}|^{2}dxdt=0,
\ee 
then,  the global   viscous dissipation vanishes, i.e., \eqref{00} holds true.  Moreover,  $u^{\nu}$ converges locally  in $L^{3}(0,T;L^{3}(\om))$,  up  to a  subsequence,  to a weak solution   of  Euler equations  \eqref{1e}. 
\end{theorem}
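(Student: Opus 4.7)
The proof of Theorem \ref{thm2} will mirror that of Theorem \ref{thm0}, with three modifications calibrated to the thinner Kato layer $\Ga_{4\nu^a}$ and the weaker $L^p$, $L^{p/2}$ boundary-layer controls on $u^\nu, P^\nu$. First, I extend the recursion \eqref{r0} to build a finite strictly increasing sequence $0 = \b_0 < \b_1 < \cdots < \b_N = a$ satisfying \eqref{beta inequ}; this is possible because the iteration $\b \mapsto \frac{1}{2(1-\a)}(1 + \b/3)$ has fixed point $\frac{3}{5-6\a}$ (and diverges when $\a \ge 5/6$), which strictly exceeds $a$ by assumption \eqref{es}. With this new sequence, the boundary-layer foliation $\{V_n\}_{n=1}^N$, the partition of unity $\{\xi_n\}_{n=1}^N$, and the multi-scale mollification $\{\overline{f_n}\}_{n=1}^N$ in \eqref{rra}--\eqref{u2} and \eqref{k1} are defined verbatim. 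The cut-off $\th$ in \eqref{3.2} is then replaced by its natural analog supported in $\om^{2\nu^a}$ with $|\na \th| \le C\nu^{-a}$, so that $\th$ vanishes inside the new Kato layer $\Ga_{2\nu^a}$.

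Next I re-establish the resolved energy balance (Proposition \ref{p}) by reworking Lemmas \ref{lemma3.1}--\ref{lemma3.3} in the new framework. In the dissipation analog of Lemma \ref{lemma3.1}, the boundary estimate \eqref{4.3} now contributes $\nu^{1 + a(2\a - 5/3)}$, whose positivity is precisely the upper bound $a < 3/(5-6\a)$ recorded in \eqref{es}, and the Hardy remainder is absorbed by the new Kato hypothesis \eqref{0j1s}. In the bulk-flux analog of Lemma \ref{lemma3.2}, the key boundary integral $\bigl(\int_0^T\!\!\int_{\Ga_{2\nu^a}} |\overline{u^\nu_N}|^4\bigr)^{1/2}$ is bounded via H\"older with \eqref{02h2s} by $C\nu^{a(p-4)/(2p)}\,\|u^\nu\|^2_{L^4_t L^p}$, in place of the $C\nu^{1/2}$ obtained from the $L^\infty$ hypothesis in \eqref{03.8j}. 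Combining this with the commutator factor from Lemma \ref{lem2.1} produces a total error exponent on $\nu$ equal to $a(2\a - 2/3 - 4/p)/2$, which is positive exactly when $p > 6/(3\a - 1)$. The pressure analog of Lemma \ref{lemma3.3} runs in parallel with $L^{p/2}$ in place of $L^\infty$, while the purely interior cancellations from Proposition \ref{pp1} carry through unchanged.

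With the resolved energy balance established, the remainder of the argument (vanishing of the global dissipation \eqref{00} and extraction of a weak Euler solution via \eqref{80}--\eqref{80a}) is a verbatim repeat of Section \ref{sec proof}. The main obstacle is the bulk-flux estimate, where the $L^p$-based interpolation on the boundary layer must be calibrated simultaneously against the Besov-commutator powers arising from the interior regularity \eqref{h1}. The threshold pair $p > 6/(3\a - 1)$ and $a < 3/(5 - 6\a)$ encodes precisely this balance: as $\a$ moves away from $1/3$, the required boundary integrability relaxes and, correspondingly, the Kato layer thickness $\nu^a$ may be taken smaller.
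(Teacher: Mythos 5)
Your overall strategy coincides with the paper's: extend the recursion so that the foliation terminates at $\beta_N=a$ (possible precisely because the fixed point $\tfrac{3}{5-6\alpha}$ of $\beta\mapsto\tfrac{1}{2(1-\alpha)}(1+\tfrac13\beta)$ exceeds $a$, cf.\ \eqref{y10}), rescale the cut-off $\theta$ to the layer of width $O(\nu^a)$, and re-run the three boundary estimates while leaving the interior cancellations of Proposition \ref{pp1} untouched. Your exponents $1+a(2\alpha-\tfrac53)$ for the resolved dissipation and $\tfrac a2(2\alpha-\tfrac23-\tfrac4p)$ for the commutator part of the bulk flux match \eqref{4.3s} and the first term of \eqref{03.8js} exactly, as do the thresholds $a<\tfrac{3}{5-6\alpha}$ and $p>\tfrac{6}{3\alpha-1}$ they generate.

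There is, however, a concrete gap in your treatment of the bulk-flux (and pressure) boundary terms. After Cauchy--Schwarz, the analogue of \eqref{03.8j} produces \emph{two} terms: the commutator term you analyze, and a second term in which the prefactor $\nu^{\frac a2(1-\frac4p)}$ (from the measure of the layer and the $L^p$ bound) multiplies the \emph{unweighted} dissipation $\bigl(\int_0^T\int_{\Gamma_{4\nu^a}}|\nabla u^\nu|^2\,dxdt\bigr)^{1/2}=\nu^{-1/2}\bigl(\nu\int_0^T\int_{\Gamma_{4\nu^a}}|\nabla u^\nu|^2\,dxdt\bigr)^{1/2}$, obtained from $\nabla\theta\, u^\nu$ via the Hardy inequality. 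To invoke the Kato hypothesis \eqref{0j1s} here one needs the extra condition $a(1-\tfrac4p)\ge 1$, which is exactly the second requirement recorded in \eqref{03.8js}; it does \emph{not} follow from $p>\tfrac{6}{3\alpha-1}$ and $a<\tfrac{3}{5-6\alpha}$ alone (for instance $\alpha=\tfrac12$, $p=13$, $a=1.01$ gives $a(1-\tfrac4p)<1$). In the $L^\infty$ setting of Theorem \ref{thm0} the prefactor is exactly $\nu^{1/2}$ and the issue is invisible, which is why your remark that the Hardy remainder is ``absorbed by the Kato hypothesis'' works for the dissipation estimate but not automatically here. Your closing claim that $p>6/(3\alpha-1)$ and $a<3/(5-6\alpha)$ ``encode precisely'' the balance is therefore incomplete: an additional coupling between $a$ and $p$ must be imposed, as the paper does (rather tersely) in \eqref{03.8js}, and your sketch should either add this hypothesis or explain why it can be dispensed with.
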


\begin{remark}
As $\alpha \rightarrow \frac{1}{3}^+,$  Theorem \ref{thm2} recovers Theorem \ref{thm0}. But as $\alpha$ increases, we can relax the regularity requirement on the boundary, cf. \eqref{02h2s}, and the thickness of the boundary layer becomes $\nu^a$ with $a>1$. In particular, as $\alpha \to 1^-$, the thickness becomes arbitrarily small.
\end{remark}

\begin{proof}
The idea of the proof is very similar to the one explained before, and hence we will only focus on the ingredients  different from those  in the proof of Theorem \ref{thm0}. 

First of all, we modify the construction of the increasing and finite sequence $\{\b_{n}\}_{n=1}^{N}$ in Section \ref{subsec BL} as
\be\la{y10} 
0=\b_{0}<\b_{1}<\cdot\cdot\cdot<\b_{N-1}\le 1<\b_{N}<\left\{\ba&\frac{3}{5-6\si},\,\,\,\,{\rm if}\,\,\,\alpha\in \left(\frac{1}{3},\frac{5}{6}\right),\\
&\infty,\,\,\,\,{\rm if}\,\,\,\alpha\in \left[\frac{5}{6},1\right),\ea\right.
\ee
and 
\bnn  
\b_{n}< \frac{1}{2(1-\alpha)}\left(1+\frac{1}{3}\b_{n-1}\right).
\enn
In fact, here we consider the case of $\b_{N} = a >1$ which  satisfies  \eqref{y10}, in stead of $\b_{N} = 1$  defined in \eqref{r1a}.
%\begin{remark} Here, we consider the case of    $a=\b_{N}>1$ which  satisfies  \eqref{y10}, in stead of $1=\b_{N}$ as defined in Theorem \ref{thm0}.  \end{remark} 

The ``pealed-off'' set $V_{N+1}$ in \eqref{y8} now becomes 
\be\la{y8s}  
\Ga_{\nu^{a}} :=V_{N+1}= \left(\cup_{n=1}^{N}V_{n}\right)^c.
\ee
In addition, the near boundary layer cut-off function $\th$ in \eqref{3.2} is modified as 
\be\la{3.2s} 0\le \th(x)\le 1,\quad \th(x)=1\,\,{\rm if}\,\, \,\, x\in \om^{2\nu^{a}},\quad   \th(x)=0\,\,{\rm if}\,\, x\notin  \om^{\nu^{a}}, \quad |\na \th|\le  2\nu^{-a}.\ee 

With the above preparations,  to complete the proof of Theorem \ref{thm2},  we only need to check the following:

\medskip

(a) \textbf {Inequality \eqref{4.3} in Lemma \ref{lemma3.1}.} \\

In Theorem \ref{thm2},  it can be treated as   
\be\la{4.3s}\ba 
& \sum_{|k-m|\le 1}\nu\int_{0}^{T}\int_{\om}\na \th \xi_{k}\xi_{m}\overline{u^{\nu}_{k}}\na \overline{u^{\nu}_{m}}\\
& =\nu\int_{0}^{T}\int_{\om^{2\nu^{a}}\cap \Ga_{4\nu^{a}}}\na \th \xi_{N}^{2}\overline{u^{\nu}_{N}}\na \overline{u^{\nu}_{N}}\\
&\le  \left( \nu \int_{0}^{T}\int_{ \om}|\na \overline{u^{\nu}_{N}}|^{2}  \right)^{\frac{1}{2}}
\left(\nu\int_{0}^{T}\nu^{a(2\alpha-\frac{5}{3})} \|u^{\nu}\|_{B_{3}^{\alpha,\infty}(\om^{\nu^{a}})}^{2}+\|\na u^{\nu}\|_{L^{2}(\Ga_{2\nu^{a}})}^{2}\right)^{\frac{1}{2}}\\
&\le C\left( \nu^{1+a(2\alpha-\frac{5}{3})}+ \nu \int_{0}^{T}\int_{ \Ga_{4\nu^{a}}}|\na \overline{u^{\nu}_{N}}|^{2} \right)^{\frac{1}{2}}\rightarrow 0,
\ea\ee 
 provided   
\bnn 
1+a\left(2\alpha-\frac{5}{3}\right)>0,
\enn 
which is valid owing to \eqref{y10}.

\medskip 
 
 (b) \textbf {Inequality \eqref{03.8j} in Lemma \ref{lemma3.2}.}\\
 
In Theorem \ref{thm2},  we estimate \eqref{03.8j}  as below
 \be\la{03.8js}\ba    
 &\left| \sum_{|k-m|\le 1}\int_{0}^{T}\int_{\om}\left(\overline{u^{\nu}_{m}}\otimes \overline{u^{\nu}_{m}}- \overline{ (u^{\nu} \otimes u^{\nu})_{m}}\right):\na \th \xi_{k}\xi_{m}\overline{u^{\nu}_{k}}\right|\\
 &\le  C\left(\int_{0}^{T} \int_{\Ga_{4\nu^{a}} \cap \om^{2\nu^{a}}} \left|\overline{u^{\nu}_{N}} \right|^{4}\right)^{\frac{1}{2}}\left(\int_{0}^{T} \nu^{-\frac{5}{3}a} \| \overline{u^{\nu}_{N}}-u^{\nu}\|_{L^{3}}^{2}+\int_{0}^{T} \int_{\Ga_{4\nu^{a}} }|\na \th u^{\nu}|^{2}\right)^{\frac{1}{2}}\\
 &\le  C \left(\nu^{a(1-\frac{4}{p})}\int_{0}^{T} \|u^{\nu}\|_{L^{p}(\Ga_{8\nu^{a}})}^{4}\right)^{\frac{1}{2}} \left(\int_{0}^{T} \nu^{a(2\alpha-\frac{5}{3})} \| u^{\nu} \|_{B_{3}^{\si,\infty}(\om^{\nu^{a}})}^{2}+\int_{0}^{T} \int_{\Ga_{4\nu^{a}} }|\na u^{\nu}|^{2}\right)^{\frac{1}{2}}\\
&\le  C(T)\nu^{\frac{1}{2}a(1-\frac{4}{p}+2\alpha-\frac{5}{3})}  +C\nu^{\frac{1}{2}(a(1-\frac{4}{p})-1)}\left(\nu\int_{0}^{T} \int_{\Ga_{4\nu^{a}} }|\na u^{\nu}|^{2}\right)^{\frac{1}{2}}\rightarrow0,
\ea\ee
provided 
\bnn 
a \left(1-\frac{4}{p}+2\alpha-\frac{5}{3} \right)>0\quad {\rm and}\quad a \left(1-\frac{4}{p} \right)-1\ge0,
\enn
which holds true due to \eqref{es} and \eqref{y10}.

\medskip
 
(c) \textbf {Inequality \eqref{3.2 3} in Lemma \ref{lemma3.2}.}\\

This can be achieved from a similar argument as that in deriving \eqref{03.8js}. 
\end{proof}

 \bigskip
 
\section*{Acknowledgement} 
R. M. Chen would like to thank Theodore Drivas for helpful discussions. The work of R. M. Chen is partially supported by National Science Foundation under Grant DMS-1907584. The work of Z. Liang is partially supported by the fundamental research funds for central universities (JBK 1805001). The work of D. Wang is partially supported by the National Science Foundation under grants DMS-1613213 and DMS-1907519.
 \bigskip

\bibliographystyle{siam}
\bibliography{reference}

\end{document}